\newcommand{\F}{\mathbb{F}}
\newcommand{\fq}{\mathbb{F}_q}
\newcommand{\fp}{\mathbb{F}_p}
\newcommand{\fqn}{\mathbb{F}_{q^n}}
\DeclareMathOperator{\tr}{Tr}
\DeclareMathOperator{\lcm}{lcm}
\numberwithin{equation}{section}
\newtheorem{theorem}{Theorem}[section]
\newtheorem{lemma}[theorem]{Lemma}
\newtheorem{corollary}[theorem]{Corollary}
\newtheorem{definition}[theorem]{Definition}
\newtheorem{remark}[theorem]{Remark}
\begin{document}
\title{Existence of normal elements with prescribed norms}



\author{Arthur Fernandes}
\address{Departamento de Matem\'{a}tica,
Universidade Federal de Minas Gerais,
UFMG,
Belo Horizonte MG (Brazil),
 31270901}
\email{arthurfpapa@gmail.com}

\author{Daniel Panario}
\address{School of Mathematics and Statistics, Carleton University, 1125 Colonel By Dr., Ottawa, ON, K1S5B6, Canada}
\email{daniel@math.carleton.ca}

\author{Lucas Reis}
\address{Departamento de Matem\'{a}tica,
Universidade Federal de Minas Gerais,
UFMG,
Belo Horizonte MG (Brazil),
 31270901}
\email{lucasreismat@mat.ufmg.br}



\date{\today}
\subjclass[2020]{11T30; 11T24}
\keywords{finite fields; normal elements; character sums}
\begin{abstract}
For each positive integer $n$, let $\F_{q^n}$ be the unique $n$-degree extension of the finite field $\F_q$ with $q$ elements, where $q$ is a prime power. It is known that for arbitrary $q$ and $n$, there exists an element $\beta\in \F_{q^n}$ such that its Galois conjugates $\beta, \beta^q, \ldots, \beta^{q^{n-1}}$ form a basis for $\F_{q^n}$ as an $\F_q$-vector space. These elements are called normal and they work as additive generators of finite fields. On the other hand, the multiplicative group $\F_{q^n}^*$ is cyclic and any generator of this group is a primitive element. Many past works have dealt with the existence of primitive and normal elements with specified properties, including the existence of primitive elements whose traces over intermediate extensions are prescribed. Inspired by the latter, in this paper we explore the existence of normal elements whose norms over intermediate extensions are prescribed. We combine combinatorial and number-theoretic ideas and obtain both asymptotic and concrete results. In particular, we completely solve the problem in the case where only one intermediate extension is considered.

\end{abstract}

\maketitle

\section{Introduction}
Let $q$ be a prime power and let $\F_q$ be the finite field with $q$ elements. Given a positive integer $n$, the unique $n$-degree extension $\F_{q^n}$ of $\F_q$ has two remarkable structures related to the field operations. First, the multiplicative group $\F_{q^n}^*$ is cyclic and any generator of such group is a {\em primitive element}.  These elements are relevant in applications such as design theory, coding theory and cryptography (most notably, the Diffie-Hellman key exchange~\cite{dh}). On the other hand, the field $\F_{q^n}$ also has an $\F_q[x]$-module structure induced by the Frobenius map $\sigma(\alpha)\mapsto \alpha^q$. Namely, for $f(x)=\sum_{i=0}^ma_ix^i\in \F_q[x]$ and $\alpha\in \F_{q^n}$, we set 
$$f(x)\circ \alpha=\sum_{i=0}^{n}a_i\alpha^{q^i}.$$
It is known that $\F_{q^n}$ is cyclic with respect to this $\F_q[x]$-module structure, i.e., there exists $\beta\in \F_{q^n}$ such that any $\alpha \in \F_{q^n}$ is written as $f_{\alpha}(x)\circ \beta $ for some $f_{\alpha}\in \F_q[x]$. The latter is equivalent to $\{\beta, \beta^q, \ldots, \beta^{q^{n-1}}\}$ being a basis for the field $\F_{q^n}$ regarded as an $\F_q$-vector space. Any such $\beta$ is a {\em normal element} over $\F_q$. Normal elements are also of great importance in applications, being particularly relevant in those where field arithmetic must be performed. For more details on normal elements, see~\cite{GAO}.

It is well known that both primitive and normal elements exist for any finite extension of finite fields. Beyond the relevance of these  elements, we can see that they work as "generators" of $\F_{q^n}$ with respect to the two aforementioned cyclic structures of the field. In 1987, Lenstra and Schoof proved that, for every prime power $q$ and every positive integer $n$, there exists an element $\alpha\in \F_{q^n}$ that is simultaneously primitive and normal over $\F_q$. The proof is mainly based on what is known nowadays as the {\em character sum method}. The method basically works as follows. Suppose that we want to prove that two sets $A, B\subseteq \F_{q^n}$ (e.g., set of primitive and normal elements) have nontrivial intersection.  If one writes the indicator functions $1_A$ and $1_B$ of $A$ and $B$, respectively, by means of additive/multiplicative characters of finite fields, analytical arguments can derive that one of the following sums
$$\sum_{x\in \F_{q^n}}1_A(x)\cdot 1_B(x)=\sum_{x\in A}1_B(x)=\sum_{x\in B}1_A(x),$$
is positive. This method has been widely used to prove existence theorems; classical results like bounds for Gauss sums and Weil's bound are frequently employed. For more details, see~\cite{char} and the references therein.

In 1990, Cohen~\cite{C1} explored the existence of a primitive element $\alpha \in \F_{q^n}^*$ whose trace over $\F_q$ is prescribed, i.e., 
$$\sum_{i=0}^{n-1}\alpha^{q^{i}}=a,$$
where $a\in \F_q$ is arbitrarily chosen. He proved that we can choose $a$ generically unless $a=0$ and either $n=2$ or $q=4$ and $n=3$. Moreover, these are genuine exceptions to the problem. In 2022, inspired by Cohen's work, in~\cite{rr} the authors explored a natural extension of this issue, namely the existence of primitive elements whose traces over several intermediate extensions are prescribed. They obtained both asymptotic and concrete results, including an existence theorem where the character sum method is not even employed (this is a byproduct of Cohen's result). For more details, see Section 2 of~\cite{rr}. More recently~\cite{R24}, the problem of primitive elements with prescribed traces provided that $n$ is fixed and $q$ is large enough has been completely solved.

We observe that the trace map has a ``multiplicative twin", namely the norm map $\alpha\mapsto \alpha^{q^{n-1}+\cdots+q+1}$. This gives rise to a natural additive-multiplicative analogue of the issues discussed in~\cite{C1,rr} and this is the subject of study in this paper. More specifically, we discuss the existence of elements $\beta\in \F_{q^n}$ that are normal over $\F_q$ and whose norms over intermediate extensions are prescribed. 

To this end, we briefly comment on the machinery and results obtained, along with the structure of the paper. First, we obtain an analogue of Cohen's Theorem.

\begin{theorem}\label{thm1}
Let $n\ge 2$ and $a\in\fq^*$. Then there exists a normal element $\alpha\in\fqn$ such that $\alpha^{q^{n-1}+\cdots+q+1}=a$ with the sole genuine exception $q=3, n=2$ and $a=-1$. 
\end{theorem}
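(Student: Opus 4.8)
The plan is to use the character sum method, detecting normality through additive characters and the prescribed-norm condition through multiplicative characters, and then to resolve the residual small cases by direct analysis. First I would exploit the $\fq[x]$-module structure on $\fqn$ induced by the Frobenius: an element $\alpha$ is normal over $\fq$ precisely when its $\fq[x]$-annihilator is $x^n-1$, equivalently when $\alpha$ generates $\fqn$ as an $\fq[x]$-module. Since the group of additive characters of $\fqn$ is itself a cyclic $\fq[x]$-module isomorphic to $\fqn$, one obtains the standard characteristic function for normal elements: writing $\Phi_q$ and $\mu_q$ for the Euler totient and M\"obius functions on monic polynomials of $\fq[x]$,
$$ \mathbf{1}_{\mathrm{norm}}(\alpha)=\frac{\Phi_q(x^n-1)}{q^n}\sum_{d\mid x^n-1}\frac{\mu_q(d)}{\Phi_q(d)}\sum_{\mathrm{ord}(\chi)=d}\chi(\alpha), $$
the inner sum running over additive characters of $\fq[x]$-order $d$. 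On the multiplicative side, the condition $N(\alpha)=a$, where $N(\alpha)=\alpha^{q^{n-1}+\cdots+q+1}$, is detected by $\frac{1}{q-1}\sum_{\psi}\psi(N(\alpha))\overline{\psi(a)}$, the sum being over the $q-1$ multiplicative characters $\psi$ of $\fqs$.

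Multiplying these indicators and summing over $\alpha\in\fqn^*$ gives a double expression for the number $\mathcal N(a)$ of normal elements of norm $a$. The crucial structural point is that $\psi\circ N$ is a multiplicative character of $\fqn^*$, since $N$ is a surjective homomorphism onto $\fqs$, and it is nontrivial exactly when $\psi$ is. Hence every cross term $\sum_{\alpha}\chi(\alpha)\psi(N(\alpha))$ with $\chi$ and $\psi$ both nontrivial is a genuine Gauss sum over $\fqn$, of absolute value $q^{n/2}$ by the classical evaluation. Isolating the contribution of the trivial $\psi$ and trivial $\chi$ yields the main term $\Phi_q(x^n-1)/(q-1)$, and collecting the remaining terms, using $|\mu_q(d)|\le 1$, the $\Phi_q(d)$ characters of each order $d$, and $|\overline{\psi(a)}|=1$, bounds the error by roughly $\frac{\Phi_q(x^n-1)}{q^n(q-1)}(q-2)\,\bigl(W(x^n-1)-1\bigr)\,q^{n/2}$, where $W(x^n-1)=2^{\omega}$ and $\omega$ is the number of distinct monic irreducible factors of $x^n-1$. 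Therefore $\mathcal N(a)>0$ is guaranteed whenever
$$ q^{n/2}>(q-2)\,\bigl(W(x^n-1)-1\bigr). $$

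This inequality holds for all but finitely many pairs $(q,n)$, yet it can fail when $x^n-1$ has many irreducible factors, most notably when $n\mid q-1$ and $x^n-1$ splits completely, so that $\omega=n$. To close this gap I would invoke a sieve of Cohen–Huczynska type: by factoring out a suitable ``core'' divisor of $x^n-1$ and treating the remaining irreducible factors with a sieving parameter, the effective value of $W$ is reduced to a quantity linear in $\omega$, turning the criterion into one satisfiable for every $(q,n)$ outside an explicit finite list, which is then settled by direct computation. The principal obstacle I anticipate is exactly this interplay between the exponential factor $W(x^n-1)$ and the bound $q^{n/2}$: extracting an unconditional criterion requires the sieve to be calibrated carefully, and the finite residual set must be organized so that the verification stays feasible.

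Finally, the genuine exception forces a separate, elementary treatment of $n=2$, where it lives. For $n=2$ an element $\alpha$ is normal over $\fq$ iff $\alpha\notin\fq$ and $\tr(\alpha)=\alpha+\alpha^q\neq 0$, while $N(\alpha)=\alpha^{q+1}=a$ describes the $q+1$ roots of $x^{q+1}=a$. From these I would subtract the ones lying in $\fq$, namely the solutions of $y^2=a$, and the ones of zero trace, namely the roots of $x^2+a$, count what remains, and check positivity directly. This elementary count shows the number is positive for every $(q,a)$ with $q\ge 4$ or $a\neq -1$, and collapses to zero exactly when $q=3$ and $a=-1$, confirming that this is the sole genuine exception.
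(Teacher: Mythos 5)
Your main argument follows essentially the paper's route: the same additive-character indicator for normal elements, a bound of size $q^{n/2}$ for the character sum restricted to the fibre of the norm map, a resulting criterion of the shape $q^{n/2-1}\gtrsim W_q(x^n-1)$, reduction of the failures to a finite computation, and an elementary count for $n=2$. The only real difference of route is cosmetic: you detect the condition $N(\alpha)=a$ with multiplicative characters of $\fqs$ and invoke the exact Gauss-sum evaluation $|G(\chi,\psi\circ N)|=q^{n/2}$, whereas the paper observes that $\{\alpha: N(\alpha)=a\}$ is a coset of the norm-one subgroup and applies a bilinear sum bound (their Corollary~\ref{cotacoset}) to get $\bigl|\sum_{\alpha\in N_a}\chi(\alpha)\bigr|\le q^{n/2}$ directly; expanding the coset sum over multiplicative characters shows the two bounds are the same, and the resulting sufficient conditions are equivalent up to a factor bounded by $q$. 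The Cohen--Huczynska sieve you propose is not needed here: the paper simply verifies the residual range $3\le n\le 7$, $q<64$ by machine, exactly as you would have to do after sieving anyway.

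The genuine gap is in your endgame for $n=2$, where you assert the outcome of the elementary count instead of performing it, and the assertion does not survive the check. Your criterion (normal iff $\alpha\notin\fq$ and $\tr(\alpha)\ne 0$) is correct, and your subtraction scheme is correct, but carried out at $q=3$ it gives the opposite of what you claim. In $\F_9=\F_3(i)$ with $i^2=-1$, the four elements of norm $-1$ are $\pm 1\pm i$; none lies in $\F_3$ and none has zero trace, so all four are normal --- for instance $1+i$ has conjugate $(1+i)^3=1-i$, and $\{1+i,1-i\}$ is a basis, with $(1+i)(1-i)=2=-1$. By contrast the four elements of norm $1$ are $\pm 1,\pm i$, which are exactly the $\le 2$ elements of $\fq$ plus the $\le 2$ trace-zero elements you subtract, so the count is $4-2-2=0$ and the genuine exception for $q=3$, $n=2$ is $a=1$, not $a=-1$. (The paper's own proof has the same sign slip in its final sentence, asserting that every normal element of $\F_9$ has norm $1$; in fact every normal element of $\F_9$ has norm $-1$.) A further small caution: your ``roots of $x^2+a$'' only gives an upper bound on the trace-zero elements of norm $a$ (a root of $x^2=-a$ need not have zero trace), which is harmless for the lower bound $q+1-4$ but matters when you try to pin down the exact exceptional pairs; you must check which of those roots actually have zero trace, as the $q=3$ example above shows.
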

We observe that the condition $a\ne 0$ is really necessary: if $a=0$, then $\alpha=0$ and this element is clearly not normal over $\F_q$.  Next, we explore the existence of normal elements with several prescribed norms over intermediate fields. In a similar fashion to~\cite{rr}, the norm of elements (normal or not) cannot be prescribed freely since the field norms are not completely independent from each other. In fact, we require a ``gluing condition" which is reminiscent of the Chinese Remainder Theorem; see Lemma~\ref{lema1}. In the case where these conditions are fulfilled, we employ the character sum method to give sufficient conditions on the existence of normal elements with several prescribed norms; see Theorem~\ref{thm2}. The latter provides asymptotic results, covering a large class of families of intermediate extensions.
As in~\cite{rr}, we also obtain an existence theorem which is simply a byproduct of Theorem~\ref{thm1} and does not require further character sum estimates;  see~Theorem~\ref{thm3}.
Finally, we restrict ourselves to the case where the degrees of the intermediate extensions are pairwise relatively prime and obtain concrete results; see Theorem~\ref{thm4}.

The paper is organized as follows. In Section 2, we introduce some relevant notation and present our main results. In Section 3, we provide some background machinery, including bounds on certain character sums. In Section 4, we prove our results.

\section{Main Results}
The norm of an element in $\fqn$ over intermediate $\F_q$-extensions plays a central role in this paper and, for convenience, we adopt the following compact notation.

\begin{definition}
For $n > 1$, $d$ a divisor of $n$ and $\alpha \in \mathbb{F}_{q^n}^*$, we set
$$N_{n/d}(\alpha)=\alpha^{q^{n-d}+\cdots+q^d+1}=\alpha^{\frac{q^n-1}{q^d-1}}$$
the norm of $\alpha$ over $\mathbb{F}_{q^d}$.
\end{definition}


 In order to state our general result on the existence of normal elements with several prescribed norms, we must impose a condition to the problem. It is well known that the norm function is transitive, that is, if $e$ divides $d$ and $d$ divides $n$, then we have $$N_{n/{e}}(\alpha)=N_{n/e}(N_{d/{e}}(\alpha))$$ for every $\alpha$ in $\fqn$. 
In particular, if $d_1 < \cdots < d_k < n$ are
divisors of $n$ and $a_i \in \mathbb{F}_{q^{d_i}}$, $1 \leq i \leq k$, in order to have an element $\alpha \in \mathbb{F}_{q^n}$
with $N_{n/d_i}(\alpha) = a_i$ we necessarily need that
\begin{equation}\label{eq1}
    N_{d_i/\gcd(d_i,d_j)}(a_i) = N_{n/\gcd(d_i,d_j)}(\alpha) = {N}_{d_j/\gcd(d_i,d_j)}(a_j), \quad 1 \leq i, j \leq k.
\end{equation}

This condition is also sufficient, as we show in Lemma \ref{lema1}. From Eq.~\eqref{eq1} we have that if $d_i$ divides some $d_j$, then $N_{n/d_i}(\alpha) = a_i$ is already implied by $N_{n/d_j}(\alpha) = a_j$. Therefore, we focus only on the divisors $d_1 < \cdots < d_k < n$ of $n$ such that $d_i \nmid d_j$ for any $1 \leq i < j \leq k$. For the case $k > 1$, that is, $n$ is not a prime power, the following notation is useful.

\begin{definition}
Let $n > 1$ be an integer that is not a prime power, let $\delta(n)$ be the number of positive divisors of $n$ and fix  $1 < k < \delta(n)$.
\begin{enumerate}[(i)]
    \item $\Gamma_k(n)$ stands for the set of tuples $\mathcal{D}=(d_1, \ldots, d_k)$ of divisors of $n$ such that $d_1 < \cdots < d_k < n$ and 
     $d_i$ does not divide $d_j$ whenever $i \neq j$.
    \item For $\mathcal{D} = (d_1, \ldots, d_k) \in \Gamma_k(n)$ set $\mathbb{F}^*_\mathcal{D} = \prod_{i=1}^{k} \mathbb{F}_{q^{d_i}}^*$. 
    
    \item A tuple $\mathcal{A} = (a_1, \ldots, a_k) \in \F_{\mathcal{D}}^*$ is $\mathcal{D}$-admissible if, for every $1 \leq i < j \leq k$,
    \[
    N_{d_i/\gcd(d_i, d_j)}(a_i) = N_{d_j/\gcd(d_i, d_j)}(a_j).
    \]
\end{enumerate}
\end{definition}
From previous observation, the norms of elements over intermediate extensions must come from $\mathcal{D}$-admissible tuples. Our main result on the existence of normal elements with prescribed norms can be stated as follows.

\begin{theorem}\label{thm2}
Let $n > 1$ be an integer that is not a prime power, $1 < k < \delta(n)$, $\mathcal{D} =
(d_1, \ldots, d_k) \in \Gamma_k(n)$ and let $\mathcal{A} = (a_1, \ldots, a_k) \in \F^*_\mathcal{D}$ be a $\mathcal{D}$-admissible tuple. Then there
exists a normal element $\alpha \in \fqn^*$ with $N_{n/d_i}(\alpha) = a_i$ for every $1 \leq i \leq k$ provided that
\begin{equation}\label{eq2}
\gcd\left(\frac{q^n-1}{q^{d_1}-1}, \ldots, \frac{q^n-1}{q^{d_k}-1}\right)\geq W_q(x^n-1)q^{n/2}.
\end{equation}
Here, $W_q(f)$ denotes the number of monic squarefree divisors of $f$ over $\fq$.

\end{theorem}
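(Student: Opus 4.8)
The plan is to apply the character sum method, combining a characteristic function for normal elements (built from additive characters and the $\fq[x]$-module structure) with one for the joint norm condition (built from multiplicative characters). Write $e_i=\frac{q^n-1}{q^{d_i}-1}$, so that $N_{n/d_i}(\alpha)=\alpha^{e_i}$, and set $e=\gcd(e_1,\dots,e_k)$, which is precisely the left-hand side of \eqref{eq2}. Since $\mathcal A$ is $\mathcal D$-admissible, Lemma~\ref{lema1} guarantees a fixed $\gamma\in\fqn^*$ with $N_{n/d_i}(\gamma)=a_i$ for all $i$; the full solution set of the system $\{\alpha^{e_i}=a_i\}_{i=1}^k$ is then the coset $\gamma\,G_e$, where $G_e\le\fqn^*$ is the unique (cyclic) subgroup of order $e$. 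Consequently the characteristic function of this coset is
\[
\mathbbm{1}_{\gamma G_e}(\alpha)=\frac{e}{q^n-1}\sum_{\chi}\overline{\chi(\gamma)}\,\chi(\alpha),
\]
where $\chi$ runs over the $\tfrac{q^n-1}{e}$ multiplicative characters of $\fqn^*$ that are trivial on $G_e$. In parallel, I would use the standard indicator of normal elements
\[
\mathbbm{1}_{\mathcal N}(\alpha)=\frac{\Phi_q(x^n-1)}{q^n}\sum_{g\mid x^n-1}\frac{\mu_q(g)}{\Phi_q(g)}\sum_{\psi}\psi(\alpha),
\]
where $g$ runs over monic divisors of $x^n-1$, $\psi$ over the additive characters of $\fqn$ of $\fq[x]$-order exactly $g$, and $\Phi_q,\mu_q$ are the polynomial analogues of Euler's and Möbius' functions (all recalled in Section 3).

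Next I would form the count $N=\sum_{\alpha\in\fqn}\mathbbm{1}_{\mathcal N}(\alpha)\,\mathbbm{1}_{\gamma G_e}(\alpha)$ and show $N>0$. Expanding both indicators and interchanging sums, $N$ becomes a constant multiple of
\[
\sum_{g\mid x^n-1}\frac{\mu_q(g)}{\Phi_q(g)}\sum_{\psi:\,\mathrm{ord}(\psi)=g}\sum_{\chi}\overline{\chi(\gamma)}\,S(\chi,\psi),\qquad S(\chi,\psi)=\sum_{\alpha\in\fqn^*}\chi(\alpha)\psi(\alpha).
\]
The \emph{main term} is isolated by taking $g=1$ (forcing $\psi$ trivial) together with $\chi$ trivial: since $S(\chi_0,\psi_0)=q^n-1$ while $S(\chi,\psi_0)=0$ for every nontrivial $\chi$, this contributes exactly $\frac{\Phi_q(x^n-1)}{q^n}\cdot e$, which is positive.

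For the error term I would invoke the Gauss sum bound $|S(\chi,\psi)|\le q^{n/2}$, valid whenever $\psi$ is nontrivial (and trivially $|S(\chi_0,\psi)|=1$ in that case); this is the key analytic input from Section 3. Every remaining triple $(g,\psi,\chi)$ has $g\neq1$, there are $\Phi_q(g)$ characters $\psi$ of order $g$ and $\frac{q^n-1}{e}$ admissible $\chi$, and $\frac{|\mu_q(g)|}{\Phi_q(g)}\cdot\Phi_q(g)=|\mu_q(g)|$. Summing $\sum_{g\mid x^n-1,\,g\neq1}|\mu_q(g)|=W_q(x^n-1)-1$ and tracking the normalizing constants, the total error is at most $\frac{\Phi_q(x^n-1)}{q^n}\bigl(W_q(x^n-1)-1\bigr)q^{n/2}$. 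Hence
\[
N\ \ge\ \frac{\Phi_q(x^n-1)}{q^n}\Bigl(e-\bigl(W_q(x^n-1)-1\bigr)q^{n/2}\Bigr),
\]
and the hypothesis $e\ge W_q(x^n-1)q^{n/2}$ from \eqref{eq2} makes the bracket at least $q^{n/2}>0$, so $N>0$ and a desired normal element exists.

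The main obstacle is the simultaneous bookkeeping of the two independent families of characters: one must correctly verify that combining the $k$ norm conditions collapses, via admissibility and Lemma~\ref{lema1}, to a single coset of the order-$e$ subgroup (so that exactly $\tfrac{q^n-1}{e}$ multiplicative characters intervene and the modulus appearing in \eqref{eq2} is indeed $e=\gcd_i e_i$), and then confirm that the cross term $S(\chi,\psi)$ is a genuine Gauss sum to which the $q^{n/2}$ bound applies, even though $\psi$ is organized by $\fq[x]$-order rather than in the usual additive labeling.
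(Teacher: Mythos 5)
Your proof is correct, and it reaches the same main-term/error-term inequality as the paper, but the implementation of the coset-detection step is genuinely different. The paper never introduces multiplicative characters: it observes that $N_{\mathcal A}=\{\alpha: N_{n/d_i}(\alpha)=a_i\ \forall i\}$ is a coset of a multiplicative subgroup, bounds $\bigl|\sum_{\alpha\in N_{\mathcal A}}\psi(\alpha)\bigr|\le q^{n/2}$ directly via the Gyarmati--S\'ark\"ozy double character sum inequality (Lemma~\ref{somacaracter} and Corollary~\ref{cotacoset}), and then feeds this uniform bound into Lemma~\ref{normaiscoset}, whose conclusion $\frac{N(S)q^n}{\Phi_q(x^n-1)}>\#S-W_q(x^n-1)q^{n/2}$ combined with $\#N_{\mathcal A}=\gcd\bigl(\tfrac{q^n-1}{q^{d_1}-1},\ldots,\tfrac{q^n-1}{q^{d_k}-1}\bigr)$ from Lemma~\ref{lema1} finishes the proof in two lines. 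You instead expand the coset indicator over the $\tfrac{q^n-1}{e}$ multiplicative characters trivial on $G_e$ and control the cross terms $S(\chi,\psi)$ by the classical Gauss sum bound $|S(\chi,\psi)|\le q^{n/2}$; your identification of the solution set as a single coset of the order-$e$ subgroup (with $e=\gcd_i e_i$) is correct and consistent with Lemma~\ref{lema1}, the normalizing factors $\tfrac{e}{q^n-1}$ and $\tfrac{q^n-1}{e}$ cancel as you note, and the final estimate is identical. The only caveat is that the Gauss sum bound you invoke is not actually the estimate recorded in Section~3 of the paper (which is the restricted-solution-set bound of Lemma~\ref{somacaracter}), so you would need to cite it separately (e.g., \cite[Theorem~5.11]{ff}); the paper's route buys a slightly leaner argument by hiding the multiplicative characters inside the coset bound, while yours makes the two character families and the main-term extraction fully explicit.
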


We also obtain the following result.

\begin{theorem}\label{thm3}
Theorem \ref{thm2} also holds if Ineq.~\eqref{eq2} is replaced by the inequality $$\lcm(d_1,
\ldots,d_k)<n.$$ 
\end{theorem}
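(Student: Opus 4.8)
The plan is to mirror the strategy of~\cite{rr} for primitive elements with prescribed traces, reducing the several-norms problem to a single relative norm and then feeding it to Theorem~\ref{thm1}. Set $m=\lcm(d_1,\ldots,d_k)$. Since every $d_i$ divides $n$ we have $m\mid n$, and the hypothesis $m<n$ gives $t:=n/m\ge 2$. By transitivity of the norm, for any $\alpha\in\fqn$ the equalities $N_{n/d_i}(\alpha)=a_i$ depend only on $\beta:=N_{n/m}(\alpha)\in\fqm^*$, through $N_{m/d_i}(\beta)=a_i$. First I would note that each $d_i$ is a proper divisor of $m$ with $d_i\nmid d_j$, so $(d_1,\ldots,d_k)\in\Gamma_k(m)$ and $\mathcal{A}$ remains admissible at level $m$; hence Lemma~\ref{lema1}, applied with $m$ in place of $n$, produces at least one $\beta\in\fqm^*$ with $N_{m/d_i}(\beta)=a_i$ for all $i$. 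This is exactly where the gluing/admissibility hypothesis is consumed. It then suffices to exhibit a single $\alpha\in\fqn$ that is normal over $\fq$ and satisfies $N_{n/m}(\alpha)=\beta$, since $N_{n/d_i}(\alpha)=N_{m/d_i}(\beta)=a_i$ would follow automatically.

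The natural next move is to realize this relative norm by a normal element via Theorem~\ref{thm1}. Applying that theorem to the extension $\fqn/\fqm$ (that is, with $q$ replaced by $q^m$ and $n$ by $t\ge 2$) yields an $\alpha$ with $N_{n/m}(\alpha)=\beta$ that is normal \emph{over $\fqm$}; the sole exceptional case $q^m=3,\ t=2,\ \beta=-1$ is vacuous here, because $k\ge 2$ forces two incomparable divisors and hence $m\ge 2$, so $q^m\ge 4$. This is the precise analogue of the corresponding step in~\cite{rr}, and it is the point at which Theorem~\ref{thm1} enters as a black box with no new character sum estimates. The main obstacle, and the essential difference from~\cite{rr}, is that \emph{normality is not invariant under change of base field}: the element just produced is normal over $\fqm$ but a priori not over $\fq$, whereas the theorem demands the latter. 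In~\cite{rr} this difficulty never appears, since primitivity of an element does not depend on the base field.

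To overcome this I would abandon the direct base change and instead exploit the coprime tensor decomposition of normal bases. Writing $n=\prod_{p\mid n}p^{a_p}$, pairwise coprimality of the prime powers gives $\fqn=\bigotimes_p \F_{q^{p^{a_p}}}$ over $\fq$, and a product $\alpha=\prod_p\alpha_p$ of elements $\alpha_p\in\F_{q^{p^{a_p}}}$ each normal over $\fq$ is again normal over $\fq$ (by linear disjointness). A short Galois computation then shows that the relative norm factors through the decomposition: for $d=\prod_p p^{b_p}$ one gets $N_{n/d}(\alpha)=\prod_p N_{p^{a_p}/p^{b_p}}(\alpha_p)^{c_p}$ with $c_p=(n/d)\big/p^{a_p-b_p}$. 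This turns the single equation $N_{n/m}(\alpha)=\beta$ into a system prescribing the individual relative norms $N_{p^{a_p}/p^{b_p}}(\alpha_p)$ inside each prime-power tower $\F_{q^{p^{a_p}}}/\fq$, which I would solve factor by factor and then reassemble. The delicate point I expect to cost the most work is precisely the non-coprime situation $\gcd(m,n/m)>1$: there the passage through $\fqm$ genuinely fails, and one must control normality over $\fq$ together with the prescribed relative norm \emph{within} a single prime-power tower, where the $\fq[x]$-module $\F_{q^{p^{a_p}}}$ is (depending on whether $p$ equals the characteristic) uniserial or otherwise, rather than over $\fqm$ directly. Settling this intra-prime-power case—again by invoking Theorem~\ref{thm1} over the appropriate intermediate field inside the tower—is the crux that makes the reassembled product the desired normal element of $\fqn$ with all prescribed norms.
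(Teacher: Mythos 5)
Your first two paragraphs reproduce, almost verbatim, the route the paper actually takes: put $e=\lcm(d_1,\dots,d_k)<n$, use Lemma~\ref{lema1} at level $e$ to get $\beta\in\F_{q^e}^*$ with $N_{e/d_i}(\beta)=a_i$, then apply Theorem~\ref{thm1} to the extension $\F_{q^n}/\F_{q^e}$ (the exceptional case is excluded since $e\ge 6$ forces $q^e>3$) and finish by transitivity of the norm. The objection you then raise is mathematically legitimate: Theorem~\ref{thm1} applied with base field $\F_{q^e}$ produces an element normal over $\F_{q^e}$, and normality over an intermediate field does not imply normality over $\F_q$ (the implication goes the other way: normal over $\F_q$ implies normal over every $\F_{q^d}$ with $d\mid n$; e.g.\ $\F_{16}$ has $12$ elements normal over $\F_4$ but only $8$ normal over $\F_2$). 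The paper's one-sentence invocation of Theorem~\ref{thm1} passes over this point silently, so you have identified a real subtlety rather than a phantom one.

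The difficulty is that your proposed repair both fails and is left unfinished. Restricting to products $\alpha=\prod_p\alpha_p$ with $\alpha_p$ in the prime-power subfields confines every norm $N_{n/d}(\alpha)$ to a proper subgroup of $\F_{q^d}^*$, namely the group generated by fixed powers of norms coming from those subfields, so arbitrary admissible tuples are out of reach. Concretely, take $q=2$, $n=12$, $\mathcal D=(2,3)$, so $\lcm(2,3)=6<12$ and the tuple $(\omega,a_2)$ with $\omega$ a generator of $\F_4^*$ is admissible; your decomposition forces $\alpha=\alpha_4\alpha_3$ with $\alpha_4\in\F_{2^4}$, $\alpha_3\in\F_{2^3}$, and then $N_{12/2}(\alpha)=N_{4/2}(\alpha_4)^3\,N_{3/1}(\alpha_3)^2\in(\F_4^*)^3\cdot(\F_2^*)^2=\{1\}$, so the value $\omega$ can never be attained, although $N_{12/2}$ is surjective on $\F_{2^{12}}^*$. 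Independently of this, the step you yourself call the crux---realizing a prescribed relative norm by an element normal over $\F_q$ inside a single prime-power tower---is exactly the original obstruction in miniature (invoking Theorem~\ref{thm1} over an intermediate field of the tower again only yields normality over that intermediate field) and is nowhere proved. So the proposal does not establish the theorem. What is actually needed, and what neither your argument nor the paper's supplies explicitly, is a reason why the coset $\{\alpha\in\F_{q^n}^*: N_{n/e}(\alpha)=\beta\}$ of size $\frac{q^n-1}{q^e-1}$ must contain an element normal over $\F_q$; note that the character-sum bound of Lemma~\ref{normaiscoset} cannot give this when $e\ge n/2$, which is precisely the regime Theorem~\ref{thm3} is meant to cover.
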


While Theorem \ref{thm2} is quite general and yields asymptotic results, it can be ineffective in some cases. For instance, if the GCD in Ineq.~\eqref{eq2} is smaller than $q^{n/2}$ (this occurs when $d_k=n/2$), then such inequality is never true. However,  under the restriction $\gcd(d_i, d_j)=1$ for every $1\leq i\neq j\leq k$, we obtain the following concrete result. 

\begin{theorem}\label{thm4}
    Let $n > 1$ be an integer that is not a prime power, $1 < k < \delta(n)$, $\mathcal{D} =
(d_1, \ldots, d_k) \in \Gamma_k(n)$ with $\gcd(d_i,d_j)=1$ and $n=d_1\cdots\ d_k$, and let $\mathcal{A} = (a_1, \ldots, a_k) \in \F^*_\mathcal{D}$ be a $\mathcal{D}$-admissible tuple. Then, there
exists a normal element $\alpha \in \fqn^*$ with prescribed norms $N_{n/d_i}(\alpha) = a_i$ for every $1 \leq i \leq k$ provided that
\begin{enumerate}
    \item $k\geq 3$,
    \item $k=2, d_1\geq 7$ and $q\ge 2$ or $d_1\in \{3, 4, 5, 6\}$ with $(d_1, d_2)\ne (3, 4)$ and $q$ is large enough. 
\end{enumerate}
\end{theorem}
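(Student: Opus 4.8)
The plan is to derive Theorem~\ref{thm4} directly from Theorem~\ref{thm2}: under the hypotheses $\gcd(d_i,d_j)=1$ and $n=d_1\cdots d_k$ I would verify Ineq.~\eqref{eq2}. Note first that Theorem~\ref{thm3} cannot be invoked here, since pairwise coprimality together with $n=d_1\cdots d_k$ forces $\lcm(d_1,\ldots,d_k)=n$; hence the whole argument must rest on a good lower bound for
$$G:=\gcd\left(\frac{q^n-1}{q^{d_1}-1},\ldots,\frac{q^n-1}{q^{d_k}-1}\right).$$

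The arithmetic heart of the proof is an exact evaluation of $G$. First I would record the elementary identity $\gcd(a/b_1,\ldots,a/b_k)=a/\lcm(b_1,\ldots,b_k)$, valid whenever each $b_i\mid a$; applying it with $a=q^n-1$ and $b_i=q^{d_i}-1$ gives $G=(q^n-1)/\lcm(q^{d_1}-1,\ldots,q^{d_k}-1)$. Next, since $\gcd_{i\in T}(q^{d_i}-1)=q^{\gcd_{i\in T}d_i}-1=q-1$ for every subset $T$ of size $\ge 2$ (the $d_i$ being pairwise coprime), the inclusion--exclusion formula for the lcm collapses to
$$\lcm(q^{d_1}-1,\ldots,q^{d_k}-1)=\frac{\prod_{i=1}^{k}(q^{d_i}-1)}{(q-1)^{k-1}}.$$
Therefore $G=(q^n-1)(q-1)^{k-1}\big/\prod_{i=1}^{k}(q^{d_i}-1)$, a quantity of size $q^{E}$ with $E=n-(d_1+\cdots+d_k)+(k-1)$; for $k=2$ this simplifies neatly to $E=(d_1-1)(d_2-1)$. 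From this exact expression one extracts a lower bound $G\ge c_q\,q^{E}$, where $c_q$ is explicit and tends to $1$ as $q\to\infty$.

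With this in hand, Ineq.~\eqref{eq2} amounts to comparing $q^{E}$ against $W_q(x^n-1)\,q^{n/2}$. I would bound $W_q(x^n-1)=2^{\omega}$, where $\omega$ is the number of distinct monic irreducible factors of $x^n-1$ over $\fq$; for fixed $n$ this is bounded (at most $2^{n}$), so the decisive parameter is the exponent gap $E-\tfrac n2=\tfrac12\big(n-2\textstyle\sum_i d_i+2(k-1)\big)$. When $k\ge 3$ the product $n=d_1\cdots d_k$ dwarfs $\sum_i d_i$, so this gap is comfortably positive and I would show, with a uniform estimate on $2^\omega$, that Ineq.~\eqref{eq2} holds for every $q\ge 2$ (the tightest instances, such as $\mathcal D=(2,3,5)$ at $q=2$, where $c_q$ is noticeably below $1$, still need to be checked by hand). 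When $k=2$ the gap equals $\tfrac{d_1d_2}{2}-d_1-d_2+1$, which vanishes precisely for $(d_1,d_2)=(3,4)$ --- explaining its exclusion --- is large for $d_1\ge 7$ (yielding all $q\ge 2$), and is a small positive number for $d_1\in\{3,4,5,6\}$ with $(d_1,d_2)\ne(3,4)$, where only ``$q$ large enough'' lets $q^{E}$ overtake the constant factor $2^{\omega}$ in $W_q(x^n-1)\,q^{n/2}$.

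I expect the main obstacle to be the finite case analysis for $k=2$ and $d_1\in\{3,4,5,6\}$: here the exponent gap is so small that one must track the lower-order factors in $G$ (i.e.\ the precise value of $c_q$) together with an explicit bound for $W_q(x^n-1)$ in order to pin down how large $q$ must be, and to confirm that $(3,4)$ is a genuine failure of this method rather than an artifact. The uniform-in-$q$ claim for $k\ge 3$ (and for $k=2$, $d_1\ge 7$) is conceptually easier but still demands a clean upper bound on $2^{\omega}$ valid for all $q$, most delicately at $q=2$.
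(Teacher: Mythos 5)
Your proposal is correct in outline and takes essentially the same route as the paper: Theorem~\ref{thm4} is deduced from Theorem~\ref{thm2} by evaluating the gcd exactly as $(q^n-1)(q-1)^{k-1}/\prod_{i=1}^k(q^{d_i}-1)$, comparing the resulting exponent against $W_q(x^n-1)q^{n/2}$ via the bounds of Lemma~\ref{cotaw}, and treating the tight instances ($k=3$ with $n\in\{30,42\}$, and $k=2$ with small $d_1$, where $(3,4)$ is exactly the vanishing-gap case) by explicit computation. The only difference is cosmetic: you retain the $(q-1)^{k-1}$ factor throughout, while the paper mostly works with the slightly weaker bound $q^{n-(d_1+\cdots+d_k)}$ and reverts to the exact expression only in the borderline cases.
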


We end this section with some important observation.

\begin{remark}
\begin{enumerate}
    \item The condition $n=d_1\cdots d_k$ in Theorem~\ref{thm4} can be dropped. In fact, since the $d_i$'s are pairwise relatively prime, we get $\lcm(d_1, \ldots, d_k)=d_1\cdots d_k$ and then the case $d_1\cdots d_k<n$ follows directly by Theorem~\ref{thm3}.    

    \item Theorem~\ref{thm4} is obtained after a careful analysis of the expressions appearing in Ineq.~\eqref{eq2}. In particular, we can obtain explicit constants for the asymptotic statement in item 2 of Theorem~\ref{thm4}. However, these constants are too large to complete the result using computer search so we omit them. 

\item The condition $\gcd(d_i, d_j)=1$ in Theorem~\ref{thm4} is not restrictive if $k=2$. In fact, if $\gcd(d_1, d_2)=d$ and $Q=q^d$, then $\F_{q^{d_i}}=\F_{Q^{d_i'}}$ with $d_i'=d_i/d$. Therefore,  $\gcd(d_1', d_2')=1$. 
\end{enumerate}

\end{remark}

\section{Preliminaries}
In this section we provide some background machinery that is further used in the proof of our results. We start with the following lemma that might be of independent interest. It provides a general result on the existence and number of elements in $\F_{q^n}$ whose norms over some intermediate extensions are prescribed.

\begin{lemma}\label{lema1}
    Let $n$ be a positive integer and $d_1<\cdots<d_k$ be divisors of $n$. 
    If $a_i\in\F_{q^{d_i}}^*, 1\leq i\leq k$, there exists an element $\alpha\in\fqn^*$ with $N_{n/d_i}(\alpha)=a_i$ if 
    $$N_{d_i/d_{i,j}}(a_i) = {N}_{d_j/d_{i,j}}(a_j), \quad 1 \leq i, j \leq k,$$ where $d_{i,j}=\gcd(d_i,d_j).$ 
    Moreover, in this case, we have $$\#\{\alpha\in\fqn^*\colon N_{n/d_i}(\alpha)=a_i, 1\leq i\leq k\}=\gcd\left(\frac{q^n-1}{q^{d_1}-1},\ldots,\frac{q^n-1}{q^{d_k}-1}\right).$$
\end{lemma}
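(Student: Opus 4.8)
The plan is to translate the multiplicative problem into a system of linear congruences via discrete logarithms and then invoke the generalized Chinese Remainder Theorem. Fix a generator $g$ of the cyclic group $\fqn^*$ and set $e_d=(q^n-1)/(q^d-1)$ for each divisor $d$ of $n$, so that the subfield unit group $\F_{q^d}^*$ is exactly the subgroup $\langle g^{e_d}\rangle$ of order $q^d-1$ and $N_{n/d}(g^t)=g^{t e_d}$. Writing a prospective solution as $\alpha=g^t$ and each target as $a_i=g^{e_{d_i}s_i}$ (with $s_i$ well defined modulo $q^{d_i}-1$), the condition $N_{n/d_i}(\alpha)=a_i$ becomes $t e_{d_i}\equiv e_{d_i}s_i \pmod{q^n-1}$, which, since $q^n-1=e_{d_i}(q^{d_i}-1)$, simplifies to the single congruence $t\equiv s_i \pmod{q^{d_i}-1}$. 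Thus finding $\alpha$ is exactly solving the system $t\equiv s_i \pmod{q^{d_i}-1}$, $1\le i\le k$.

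First I would record the arithmetic identity $\gcd(q^{d_i}-1,q^{d_j}-1)=q^{\gcd(d_i,d_j)}-1=q^{d_{i,j}}-1$. With this in hand, the generalized Chinese Remainder Theorem says the system above is solvable if and only if $s_i\equiv s_j \pmod{q^{d_{i,j}}-1}$ for all pairs $i,j$. The key step is to verify that this pairwise consistency is precisely the stated gluing hypothesis: a direct computation with exponents gives $N_{d_i/d_{i,j}}(a_i)=g^{s_i e_{d_{i,j}}}$ and likewise for $j$, so $N_{d_i/d_{i,j}}(a_i)=N_{d_j/d_{i,j}}(a_j)$ holds iff $s_i e_{d_{i,j}}\equiv s_j e_{d_{i,j}}\pmod{q^n-1}$, iff $s_i\equiv s_j\pmod{q^{d_{i,j}}-1}$. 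Hence the hypothesis guarantees solvability, establishing existence.

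For the count, the generalized CRT also tells us that, when solvable, the solutions $t$ form a single residue class modulo $\lcm(q^{d_1}-1,\ldots,q^{d_k}-1)$, so the number of admissible $t$ in $\Z/(q^n-1)$ equals $(q^n-1)/\lcm(q^{d_1}-1,\ldots,q^{d_k}-1)$. Using the elementary identity $\gcd(M/a_1,\ldots,M/a_k)=M/\lcm(a_1,\ldots,a_k)$ valid when each $a_i\mid M$ (here $M=q^n-1$), this rewrites as $\gcd\big((q^n-1)/(q^{d_1}-1),\ldots,(q^n-1)/(q^{d_k}-1)\big)$, matching the claimed formula; equivalently, one notes the solution set is a coset of $\bigcap_i\ker N_{n/d_i}$, whose order is this same gcd.

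I expect the only genuinely delicate point to be the bookkeeping that makes the gluing condition coincide exactly with the CRT consistency condition --- in particular, keeping the moduli $q^{d_i}-1$, $q^{d_{i,j}}-1$ and the exponents $e_{d_i}$, $e_{d_{i,j}}$ straight, and invoking the multi-modulus CRT in the form where pairwise compatibility suffices for global solvability (rather than the coprime-moduli version). Everything else is routine once the discrete-logarithm dictionary is in place.
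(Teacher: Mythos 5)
Your proposal is correct and follows essentially the same route as the paper's proof: pass to discrete logarithms with respect to a fixed generator, reduce each norm condition to a congruence $t\equiv s_i\pmod{q^{d_i}-1}$, invoke the pairwise-compatibility form of the Chinese Remainder Theorem together with $\gcd(q^{d_i}-1,q^{d_j}-1)=q^{d_{i,j}}-1$, and count solutions via $(q^n-1)/\lcm(q^{d_1}-1,\ldots,q^{d_k}-1)=\gcd\bigl(\tfrac{q^n-1}{q^{d_1}-1},\ldots,\tfrac{q^n-1}{q^{d_k}-1}\bigr)$. The only cosmetic difference is that you parametrize the targets as $a_i=g^{e_{d_i}s_i}$ while the paper writes $a_i=(\theta^{t_i})^{(q^n-1)/(q^{d_i}-1)}$; the verification that the gluing hypothesis matches the CRT consistency condition is identical in substance.
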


\begin{proof}
    Let $\theta\in\fqn^*$ be a primitive element. Hence, for each $\alpha\in \F_{q^n}^*$, there exists a unique $0\leq s<q^n-1$  such that $\alpha=\theta^s$.
    Since every element in $\F_{q^{d_i}}^*$ can be seen as the norm of an element in $\fqn^*$, there exists an integer $0\leq t_i<q^n-1$ such that $$(\theta^{t_i})^\frac{q^n-1}{q^{d_i}-1}=a_i.$$
Therefore, $N_{n/d_i}(\alpha)=a_i$ if and only if 
$\label{eqn1}
    (\theta^{s})^\frac{q^n-1}{q^{d_i}-1}=(\theta^{t_i})^\frac{q^n-1}{q^{d_i}-1}.$
As $\theta$ is a primitive element, the last equality can be rewritten as the modular equation 
$$\label{eqn2}
    \frac{q^n-1}{q^{d_i}-1}\cdot s\equiv \frac{q^n-1}{q^{d_i}-1}\cdot t_i \pmod{q^n-1},$$
which is equivalent to $s\equiv t_i\pmod {q^{d_i}-1}$. As a consequence, we conclude that there exists $\alpha\in \F_{q^n}^*$ with $N_{n/d_i}(\alpha)=a_i$ for every $1\leq i\leq k$ if and only if the modular system
$$s\equiv t_i\pmod{q^{d_i}-1},\, 1\le i\le k,$$
has a solution. By the Chinese Remainder Theorem, the previous system has a solution if and only if 
    $t_i\equiv t_j \pmod{\gcd(q^{d_i}-1,q^{d_j}-1)}$
 or equivalently
\begin{equation}\label{eqn3}
t_i\equiv t_j \pmod {q^{d_{i,j}}-1}.   
\end{equation}
In the affirmative case, this system has exactly $$\frac{q^n-1}{\lcm(q^{d_1}-1,\ldots,q^{d_k}-1)}=\gcd\left(\frac{q^n-1}{q^{d_1}-1},\ldots,\frac{q^n-1}{q^{d_k}-1}\right)$$  incongruent solutions modulo $q^n-1$. Observe that since $\theta$ is a primitive element, such incongruent solutions will be in one-to-one correspondence with the elements $\alpha$ with prescribed norms. Hence it suffices to prove that Eq.~\eqref{eqn3} holds. 
Such equation is equivalent to $$\frac{q^n-1}{q^{d_{i,j}}-1}t_i\equiv \frac{q^n-1}{q^{d_{i,j}}-1}t_j \pmod {q^{n}-1}.$$
The latter holds since, from hypothesis, $N_{d_i/d_{i, j}}(a_i)=N_{d_j/d_{i, j}}(a_j)$ and then 
$$\theta^{\frac{q^n-1}{q^{d_{i,j}}-1}t_i}=N_{d_i/d_{i, j}}(a_i)=N_{d_j/d_{i, j}}(a_j)=\theta^{\frac{q^n-1}{q^{d_{i,j}}-1}t_j}.$$



\end{proof}

\subsection{The $\F_q$-order}
In this section we briefly discuss the $\F_q[x]$-module structure of finite fields that is induced by Frobenius maps.

\begin{definition}
    For $f\in\fq[x]$ with $f=\sum_{i=0}^ka_ix^i$, set $L_f(x)=\sum_{i=0}^ka_ix^{q^i}$
\end{definition}
Given $\alpha\in\fqn$, we set $f\circ\alpha:=L_f(\alpha).$ This defines an $\fq[x]$-module structure on $\F_{q^n}$. In other words, for every $f, g\in \F_q[x]$ and $\alpha, \beta\in \F_{q^n}$, the following hold:
\begin{enumerate}
    \item $f\circ (\alpha+\beta)=f\circ \alpha+f\circ \beta$;
    \item $1\circ \alpha=\alpha$;
    \item $(f+g)\circ \alpha=f\circ \alpha +g\circ \alpha$;
    \item $(fg)\circ \alpha=f\circ (g\circ \alpha)$.
\end{enumerate}
The proof of these identities follows by direct calculations so we omit them. Observe that for $\alpha\in \fqn$ we have $\alpha^{q^n}-\alpha=0$ and so $(x^n-1)\circ \alpha=0$. In particular, if $h\in \F_q[x]$ is the nonzero polynomial of least degree such that $h\circ \alpha=0$,  we easily conclude that $h$ must be a divisor of $x^n-1$. We arrive at the following definition.

\begin{definition}
    Let $\alpha\in\fqn$. The $\fq$-order of $\alpha$ is the monic divisor $g\in \F_q[x]$ of $x^n-1$ of least degree satisfying $g\circ \alpha=0$.
\end{definition}
Recall that an element $\alpha\in \F_{q^n}$ is normal over $\F_{q^n}$ if the set $\{\beta, \beta^q, \ldots, \beta^{q^{n-1}}\}$ is an $\F_q$-basis for $\F_{q^n}$. Equivalently, $\beta$ is a generator of $\F_{q^n}$ with the $\F_q[x]$-module structure previously defined. In particular, $\alpha$ is a normal element of $\fqn$ if and only if every $\beta\in\fqn$ can be written as $L_f(\alpha)$ for some $f\in\fq[x]$ with $\deg(f)<n$. Equivalently, $\alpha$ is normal if and only if its $\F_q$-order equals $x^n-1$. 

The existence of normal elements for every finite extension of finite fields is known. In particular, $\F_{q^n}$ is always a cyclic $\F_q[x]$-module. The following theorem provides a GCD criterion on the normality of elements in finite fields.

\begin{theorem}\cite[Theorem 2.39]{ff}\label{gcdnormal}
For $\alpha \in \mathbb{F}_{q^n}$, $\alpha$ is  normal over $\mathbb{F}_q$ if and only if the polynomials $x^n - 1$ and $\alpha x^{n-1} + \alpha^q x^{n-2} + \cdots + \alpha^{q^{n-1}}$ in $\mathbb{F}_{q^n}[x]$ are relatively prime.
\end{theorem}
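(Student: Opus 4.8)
The plan is to stay entirely inside the $\fq[x]$-module structure on $\fqn$ developed above and to convert the normality criterion ``the $\fq$-order of $\alpha$ equals $x^n-1$'' into the stated gcd condition. Throughout, write $g_\alpha(x)=\alpha x^{n-1}+\alpha^q x^{n-2}+\cdots+\alpha^{q^{n-1}}=\sum_{i=0}^{n-1}\alpha^{q^i}x^{n-1-i}\in\fqn[x]$ for the polynomial in the statement, and recall that for $f=\sum_j a_j x^j\in\fq[x]$ one has $f\circ\alpha=\sum_j a_j\alpha^{q^j}$.

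First I would establish the bridging identity
$$f(x)\,g_\alpha(x)\equiv \sum_{m=0}^{n-1}\bigl((x^m f)\circ\alpha\bigr)\,x^{\,n-1-m}\pmod{x^n-1},$$
obtained by expanding the product, reducing exponents modulo $n$, and matching the coefficient of $x^{n-1-m}$ (using $\alpha^{q^n}=\alpha$); in particular the coefficient of $x^{n-1}$ is exactly $f\circ\alpha$. Since $(x^m f)\circ\alpha=\sigma^m(f\circ\alpha)$, where $\sigma$ is the Frobenius, all of these coefficients are Frobenius images of $f\circ\alpha$. Consequently, for every $f\in\fq[x]$,
$$f\circ\alpha=0 \iff (x^n-1)\mid f(x)\,g_\alpha(x)\ \text{ in } \fqn[x];$$
the forward direction uses that $f\circ\alpha=0$ forces all coefficients above to vanish, while the backward direction only reads off the $x^{n-1}$-coefficient.

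The next and most delicate step is to show that $d(x):=\gcd(x^n-1,g_\alpha(x))$, a priori only a polynomial over $\fqn$, actually lies in $\fq[x]$. For this I would use the relation $g_\alpha^{\sigma}(x)=x\,g_\alpha(x)-\alpha(x^n-1)$, where $g_\alpha^{\sigma}$ denotes $\sigma$ applied to the coefficients of $g_\alpha$; this is a one-line computation together with $\alpha^{q^n}=\alpha$. Because $\gcd(x,x^n-1)=1$ and $x^n-1$ is $\sigma$-invariant, applying $\sigma$ to the defining gcd gives $d^{\sigma}=\gcd(x^n-1,g_\alpha^{\sigma})=\gcd(x^n-1,x\,g_\alpha)=\gcd(x^n-1,g_\alpha)=d$, so the monic polynomial $d$ is fixed by Frobenius and hence has coefficients in $\fq$. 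Therefore $e(x):=(x^n-1)/d(x)$ also lies in $\fq[x]$.

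Finally I would identify the $\fq$-order of $\alpha$ with $e$. Writing $x^n-1=d\,e$ and $g_\alpha=d\,h$ with $\gcd(e,h)=1$, the equivalence above becomes, for $f\in\fq[x]$, $f\circ\alpha=0\iff e\mid f h\iff e\mid f$; and since $e\in\fq[x]$, divisibility in $\fqn[x]$ and in $\fq[x]$ coincide. Thus the annihilator of $\alpha$ in $\fq[x]$ is exactly $(e)$, that is, the $\fq$-order of $\alpha$ equals $e=(x^n-1)/\gcd(x^n-1,g_\alpha)$. Since $\alpha$ is normal precisely when its $\fq$-order is $x^n-1$, equivalently when $d=1$, we conclude that $\alpha$ is normal if and only if $\gcd(x^n-1,g_\alpha(x))=1$, which is the assertion. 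The main obstacle is the Frobenius-descent step guaranteeing $d\in\fq[x]$: without it one is pushed into a less transparent comparison of the $\fq$- and $\fqn$-dimensions of the kernel of the map $f\mapsto f\circ\alpha$, where one must separately argue that the relevant $\fqn$-subspace is defined over $\fq$.
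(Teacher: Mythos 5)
The paper does not prove this statement at all: it is quoted verbatim from Lidl--Niederreiter \cite[Theorem 2.39]{ff}, so there is no in-paper argument to compare against. Your proof is correct and complete as a self-contained argument. The bridging identity $f(x)g_\alpha(x)\equiv\sum_{m=0}^{n-1}\bigl((x^mf)\circ\alpha\bigr)x^{n-1-m}\pmod{x^n-1}$ checks out (the coefficient of $x^{n-1-m}$ collects exactly the terms with $i\equiv j+m\pmod n$, and $\alpha^{q^n}=\alpha$ handles the wrap-around), the relation $g_\alpha^{\sigma}=x\,g_\alpha-\alpha(x^n-1)$ is verified by a direct computation, and the Frobenius-descent step correctly forces the monic $\gcd$ to lie in $\F_q[x]$, after which the identification of the annihilator with $\bigl((x^n-1)/\gcd(x^n-1,g_\alpha)\bigr)$ is clean (including the degenerate case $\alpha=0$). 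Your route stays entirely inside the $\F_q[x]$-module formalism that the paper itself sets up in Section 3.1, which fits the surrounding text well, and it actually proves more than the cited statement: you obtain the exact formula for the $\F_q$-order of $\alpha$ as $(x^n-1)/\gcd(x^n-1,g_\alpha)$, of which the normality criterion is the special case $\gcd=1$. The textbook proof reaches the same conclusion through the theory of linear operators and linear recurring sequences; your argument is arguably more elementary and more directly reusable in the context of this paper. One small presentational point: you may wish to state explicitly that applying $\sigma$ coefficientwise is a ring automorphism of $\fqn[x]$ fixing $x^n-1$, which is what licenses $d^{\sigma}=\gcd\bigl((x^n-1)^{\sigma},g_\alpha^{\sigma}\bigr)$.
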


\subsection{Characters and a characteristic function}
An additive character of $\fqn$ is a group homomorphism from the additive group $\fqn$ to the set of complex roots of unity. If $p$ is the characteristic of $\fq$ with $q=p^s$ and $\tr:\fqn\to\fp$ is the absolute trace function, that is, $\tr(\alpha)=\sum_{i=0}^{ns-1}\alpha^{p^i}$, the canonical additive character of $\fqn$ is the function
$$\psi_1(\alpha)=e^{\frac{2\pi i\tr(\alpha)}{p}}.$$ Every additive character of $\fqn$ can be written as $\psi_c(\alpha)=\psi_1(c\alpha)$ for some $c\in\fqn$. Moreover, the set $\widehat{\fqn}$ of additive characters of $\fqn$ is a group isomorphic to the additive group of $\fqn$. The neutral element of such group is the {\em trivial additive character} $\psi_0$, which maps every element of $\F_{q^n}$ to $1\in \mathbb C$.

\begin{definition}
    For a positive integer $n$, let $\mathbbm{1}_N$ be the characteristic function for normal elements in $\fqn$, that is, given $\alpha\in\fqn$, 
$$\mathbbm{1}_N(\alpha) =
\begin{cases} 
1 & \text{if } \alpha \text{ is normal}, \\
0 & \text{otherwise}.
\end{cases}$$
\end{definition}
We can express the characteristic function of normal elements in terms of additive characters. Before presenting the formula, we need to introduce some arithmetic functions over $\F_q[x]$.
\begin{definition}
    Given a monic polynomial $f\in\fq[x]$, we define the following quantities:
\begin{enumerate}[(i)]
    \item $\Phi_q(f) = 
\left|\left(
\frac{\mathbb{F}_q[x]}{\langle f \rangle}
\right)^*\right|$, that is, $\Phi_q(f)$ is the number of polynomials $g\in \F_q[x]$ of degree $<\deg(f)$ such that $\gcd(f, g)=1$. 
\item $\mu_q(f) = 0$ if $f$ is not square-free. Otherwise, $\mu_q(f) = (-1)^r$ where $r$ is the number of distinct irreducible divisors of $f$, defined over $\mathbb{F}_q$.
\item $W_q(f)$ is the number of squarefree monic divisors of $f$, defined over $\F_q$. 
\end{enumerate}
\end{definition}

We also can see $\widehat{\fqn}$ as an $\fq[x]$-module under the action $f\circ \psi:=\psi(L_f(x))$, where $f\in\fq[x]$ and $\psi\in\widehat\fqn$. We define the $\fq$-order of an additive character as follows.

\begin{definition}
    Let $\psi\in\widehat\fqn$. The $\fq$-order of $\psi$ is the monic divisor $h\in \F_q[x]$ of $x^n-1$ of least degree such that $\psi\circ h$ is the trivial character $\psi_0$. Moreover, $\Lambda_f$ stands for the set of additive characters $\psi$ of $\fqn$ whose $\fq$-order is $f$.
\end{definition}

In particular, the only additive character of $\F_q$-order $1\in\fq[x]$ is the trivial character.

\begin{remark}\label{rmk1}
If $f\in \F_q[x]$ is a monic divisor of $x^n-1$, then  $\# \Lambda_f=\Phi_q(f)$.
\end{remark}
The following lemma provides a character sum expression for the indicator function of the set of normal elements. 

\begin{lemma}\label{norma}
    For a positive integer $n$ and $\alpha$ in $\fqn$, we have that
    $$\mathbbm{1}_N(\alpha)=\frac{\Phi_q(x^n-1)}{q^n}\sum_{f\mid x^n-1}\frac{\mu_q(f)}{\Phi_q(f)}\sum_{\psi\in\Lambda_f}\psi(\alpha),$$
 where the outer sum is over the monic divisors of $x^n-1$, defined over $\F_q$.    
\end{lemma}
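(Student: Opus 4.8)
The plan is to derive the character-sum formula for $\mathbbm{1}_N$ by mimicking the classical construction of the indicator function of primitive elements, but transported to the additive $\F_q[x]$-module setting via the duality between the module $\F_{q^n}$ and its character group $\widehat{\F_{q^n}}$. Recall that $\alpha$ is normal if and only if its $\F_q$-order is exactly $x^n-1$, equivalently $f \circ \alpha \ne 0$ for every proper monic divisor $f$ of $x^n-1$ (it suffices to check the maximal proper divisors, i.e. $f=(x^n-1)/p(x)$ for irreducible $p \mid x^n-1$). So the first step is to write $\mathbbm{1}_N(\alpha)$ as a sum over divisors $f \mid x^n-1$ of terms weighted by $\mu_q(f)$, playing the role of a Möbius inversion over the divisor lattice of $x^n-1$ in $\F_q[x]$.

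The key technical input I would isolate first is an orthogonality relation for the inner character sums $\sum_{\psi \in \Lambda_f}\psi(\alpha)$. For a fixed monic divisor $g \mid x^n-1$, consider $\sum_{\psi : \,\text{ord}(\psi)\mid g}\psi(\alpha)$, where the sum runs over all characters whose $\F_q$-order divides $g$. These are precisely the characters that are trivial on the submodule $(x^n-1)/g \circ \F_{q^n} = \{h\circ\alpha : h\}$ of appropriate shape; more usefully, $\psi$ has $\F_q$-order dividing $g$ iff $\psi$ annihilates the kernel submodule associated to $g$. By standard orthogonality of characters on the finite abelian group $\F_{q^n}$, this sum equals the size of that submodule times $\mathbbm{1}[\,g \circ \alpha = 0\,]$, i.e. it detects whether the $\F_q$-order of $\alpha$ divides $g$. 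I would record this as the central computation: $\sum_{\text{ord}(\psi)\mid g}\psi(\alpha)$ is a constant multiple of the indicator that $g\circ\alpha=0$. Splitting the left side as $\sum_{f\mid g}\sum_{\psi\in\Lambda_f}\psi(\alpha)$ and performing Möbius inversion over divisors of $x^n-1$ then converts ``order divides $g$'' statements into the ``order equals $x^n-1$'' statement, producing the $\mu_q(f)$ weights.

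The normalizing constants come from counting. By Remark~\ref{rmk1} we have $\#\Lambda_f = \Phi_q(f)$, and the submodule detected by order dividing $g$ has size $q^{\deg g}$; the full group has order $q^n$. To get a clean $\mu_q(f)/\Phi_q(f)$ weight, I would write the natural inclusion-exclusion identity
$$
\mathbbm{1}_N(\alpha)=\sum_{f\mid x^n-1}\mu_q(f)\cdot\frac{\#\{\psi:\text{ord}(\psi)\mid (x^n-1)/f\}}{?}\sum\cdots
$$
and then match constants so that the prefactor becomes $\Phi_q(x^n-1)/q^n$. Concretely, the identity $\sum_{f\mid x^n-1}\mu_q(f)/\Phi_q(f)=\Phi_q(x^n-1)/q^n\cdot(\text{something})$ should fall out of the multiplicativity of $\Phi_q$ and $\mu_q$ over the coprime factorization of $x^n-1$ into prime powers, reducing the whole verification to a local check on each irreducible factor $p(x)\mid x^n-1$.

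The main obstacle I anticipate is the bookkeeping of the $\F_q[x]$-module duality: precisely identifying, for each divisor $g\mid x^n-1$, which characters $\psi$ satisfy $\text{ord}(\psi)\mid g$ and what the exact cardinality of the corresponding annihilator submodule is, so that the orthogonality sum yields exactly $q^{\deg g}\,\mathbbm{1}[g\circ\alpha=0]$ with the correct exponent. Since $\F_{q^n}$ need not be a \emph{free} $\F_q[x]/(x^n-1)$-module when $\gcd(n,q)>1$ (the polynomial $x^n-1$ can have repeated factors), one must be careful to argue entirely with orders of submodules and the pairing $\langle\psi,\alpha\rangle=\psi(\alpha)$ rather than picking bases. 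Once the duality $\Lambda_f\leftrightarrow$ (order-$f$ submodule) is pinned down and the local multiplicativity of $\Phi_q,\mu_q$ is invoked, the remaining algebra is routine.
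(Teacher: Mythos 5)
The paper does not actually prove Lemma~\ref{norma}; it defers to Section~5.2 of \cite{epkn}, and your overall strategy --- M\"obius inversion over the divisor lattice of $x^n-1$ combined with character orthogonality in the $\F_q[x]$-module setting --- is exactly the standard argument given there. However, the one concrete technical claim you make, the orthogonality relation at the heart of the proof, is stated with the duality inverted, and this is a genuine error. A character $\psi$ has $\F_q$-order dividing $g$ iff $g\circ\psi=\psi_0$, i.e.\ iff $\psi$ is trivial on the \emph{image} submodule $g\circ\F_{q^n}=\{L_g(\beta):\beta\in\fqn\}$, not on the kernel $\{\beta:g\circ\beta=0\}=\frac{x^n-1}{g}\circ\F_{q^n}$ as you assert. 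Consequently orthogonality gives
$$\sum_{\mathrm{ord}(\psi)\mid g}\psi(\alpha)=q^{\deg g}\,\mathbbm{1}\!\left[\tfrac{x^n-1}{g}\circ\alpha=0\right],$$
so the sum detects whether the $\F_q$-order of $\alpha$ divides the \emph{complementary} divisor $(x^n-1)/g$, not whether $g\circ\alpha=0$. Your version already fails at $g=1$ (the left side is $\psi_0(\alpha)=1$ for every $\alpha$, while $\mathbbm{1}[1\circ\alpha=0]=\mathbbm{1}[\alpha=0]$) and at $g=x^n-1$ (left side $q^n\,\mathbbm{1}[\alpha=0]$, your right side identically $q^n$). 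With the corrected relation the inversion reads
$$\mathbbm{1}_N(\alpha)=\sum_{g\mid x^n-1}\frac{\mu_q(g)}{q^{\deg g}}\sum_{f\mid g}\sum_{\psi\in\Lambda_f}\psi(\alpha),$$
and interchanging the sums yields the coefficient $\sum_{f\mid g\mid x^n-1}\mu_q(g)q^{-\deg g}=\frac{\Phi_q(x^n-1)}{q^n}\cdot\frac{\mu_q(f)}{\Phi_q(f)}$ by multiplicativity of $\mu_q$ and $\Phi_q$ --- precisely the constant matching that your proposal leaves as a literal ``?'' in a display and asserts ``should fall out.''

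Two further points. The cardinality $\#\{\beta\in\fqn:g\circ\beta=0\}=q^{\deg g}$, which fixes the constant $q^{\deg g}$ above, is exactly the statement that $\F_{q^n}$ is a cyclic, hence free rank-one, $\F_q[x]/(x^n-1)$-module; this is the normal basis theorem and holds for all $q$ and $n$, including when $p\mid n$ and $x^n-1$ is not squarefree, so your concern about non-freeness is unfounded --- and if you genuinely refused to use cyclicity you could not pin down that cardinality at all. In short: right method, but the central computation as written is false and must be repaired by the complementary-divisor swap before the rest of the bookkeeping can be carried out.
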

For a proof of Lemma \ref{norma} and Remark \ref{rmk1}, see Section 5.2 of \cite{epkn}.

\subsection{Estimates}
    


Here we provide estimates for some character sums and bounds on the function $W$ at polynomials of the form $x^n-1$. We start with the following result.

\begin{lemma}\cite[Corollary 1]{eiff}
\label{somacaracter}
    If $A, B \subseteq \fqn$ and ${\psi}$ is a nontrivial additive character
of $\fqn$, then we have
$$
\left| \sum_{x \in A} \sum_{y \in B} {\psi}(xy) \right| \leq (\#A \#B q^n)^{1/2}.
$$
\end{lemma}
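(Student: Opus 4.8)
The plan is to reduce the double sum to a single-variable sum via the Cauchy--Schwarz inequality and then exploit orthogonality of additive characters. Write $S = \sum_{x \in A}\sum_{y \in B}\psi(xy)$ and regard it as $\sum_{x \in A} c_x$, where $c_x = \sum_{y \in B}\psi(xy)$.

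First I would apply Cauchy--Schwarz to the outer sum and then enlarge the range of summation:
$$|S|^2 = \left|\sum_{x\in A} c_x\right|^2 \le \#A \sum_{x\in A}|c_x|^2 \le \#A \sum_{x\in \fqn}|c_x|^2.$$
The last inequality holds because we are only adding the non-negative quantities $|c_x|^2$ corresponding to $x \in \fqn \setminus A$. Replacing the restricted index set $A$ by the whole field is precisely what makes the ensuing orthogonality computation clean, and it costs nothing since the discarded terms are non-negative.

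Next I would expand the modulus squared, using that $\psi$ is a character so that $\overline{\psi(xy_2)} = \psi(-xy_2)$ and $\psi(xy_1)\psi(-xy_2) = \psi\bigl(x(y_1-y_2)\bigr)$, and then interchange the order of summation:
$$\sum_{x\in \fqn}|c_x|^2 = \sum_{x\in\fqn}\sum_{y_1,y_2\in B}\psi(xy_1)\overline{\psi(xy_2)} = \sum_{y_1,y_2\in B}\sum_{x\in\fqn}\psi\bigl(x(y_1-y_2)\bigr).$$
The crux is the evaluation of the inner sum. For fixed $y_1, y_2$, the map $x \mapsto \psi\bigl(x(y_1-y_2)\bigr)$ is itself an additive character of $\fqn$; since $\psi$ is nontrivial and multiplication by a nonzero field element is a bijection of $\fqn$, this character is trivial precisely when $y_1 = y_2$. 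By orthogonality of additive characters, the inner sum therefore equals $q^n$ when $y_1 = y_2$ and $0$ otherwise. Hence $\sum_{x\in\fqn}|c_x|^2 = \#B \cdot q^n$, and combining this with the Cauchy--Schwarz bound yields $|S|^2 \le \#A\,\#B\,q^n$, which gives the claimed inequality after taking square roots.

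The only point that genuinely requires care is verifying that $x\mapsto \psi\bigl(x(y_1-y_2)\bigr)$ is nontrivial whenever $y_1\ne y_2$; this is where the hypothesis that $\psi$ is nontrivial is used, together with the bijectivity of multiplication by the nonzero element $y_1-y_2$. Beyond this elementary orthogonality relation, no character sum estimates (such as Weil-type bounds) are needed, so I do not expect any substantial analytic obstacle; the argument is entirely formal once the Cauchy--Schwarz step is set up correctly.
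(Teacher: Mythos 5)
Your proof is correct and is essentially the standard argument behind the cited result of Gyarmati--Sárközy (the paper itself gives no proof, only the citation): Cauchy--Schwarz in the variable $x$, extension of the summation to all of $\F_{q^n}$, and orthogonality of additive characters, with the nontriviality of $\psi$ used exactly where you indicate. No gaps.
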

We obtain the following corollary.

\begin{corollary}\label{cotacoset}
    Let $G$ be a subgroup of the multiplicative group $\fqn^*$ and $H$ a coset of $G$ in $\fqn^*$. If $\psi$ is a nontrivial additive character of $\fqn$, we have 
    $$\left|\sum_{\alpha\in H}\psi(\alpha)\right|\leq q^{n/2}.$$
\end{corollary}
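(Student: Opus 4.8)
The plan is to reduce the coset sum to the double-sum form controlled by Lemma~\ref{somacaracter}. Write $H = \beta G$ for some fixed $\beta \in \fqn^*$, so that every element of $H$ is of the form $\beta g$ with $g \in G$. The difficulty is that $G$ is a multiplicative object while Lemma~\ref{somacaracter} bounds a sum of $\psi(xy)$ over a \emph{product} set $A \times B$; I must therefore manufacture sets $A$ and $B$ whose product realizes the coset $H$. The natural choice is to exploit the fact that $G$, being a subgroup of the cyclic group $\fqn^*$, is itself cyclic, say $G = \langle \gamma \rangle$ with $|G| = m$ dividing $q^n - 1$.

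First I would observe that since every element of $G$ is an $\ell$-th power where $\ell = (q^n-1)/m$, we can write $G = \{x^{\ell} : x \in \fqn^*\}$, but more usefully $G$ itself is closed and its elements are exactly the $m$-th roots of unity in $\fqn$. The cleanest route is to apply the orthogonality of multiplicative characters. Let me instead take $A = \{\gamma^i : 0 \le i < r\}$ and $B = \{\gamma^{rj} : 0 \le j < s\}$ where $m = rs$, so that the product set $\{ab : a \in A, b \in B\}$ enumerates each element of $G$ exactly once as $ab = \gamma^{i + rj}$ ranges over all residues modulo $m$. Then
$$
\sum_{\alpha \in H} \psi(\alpha) = \sum_{g \in G} \psi(\beta g) = \sum_{a \in A}\sum_{b \in B} \psi(\beta a b) = \sum_{a \in \beta A}\sum_{b \in B} \psi'(ab),
$$
where absorbing $\beta$ into $A$ (and noting $\psi$ composed with the fixed nonzero character remains a nontrivial additive character) puts the sum exactly in the shape of Lemma~\ref{somacaracter}.

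Applying Lemma~\ref{somacaracter} with $\#A = r$ and $\#B = s$ then yields the bound $(rs\, q^n)^{1/2} = (m\, q^n)^{1/2}$, which is too weak by a factor of $m^{1/2}$; so a naive product decomposition loses too much. The main obstacle is thus choosing the factorization so that the geometric-mean penalty $\sqrt{\#A\,\#B}=\sqrt{m}$ is absorbed. The correct idea is to balance against the trivial bound: the quantity $|\sum_{\alpha\in H}\psi(\alpha)|$ is at most $\min(m, (\#A\,\#B\,q^n)^{1/2})$ over all valid factorizations $m = \#A\cdot\#B$, and I would optimize the choice of $r,s$ (taking $r \approx s \approx \sqrt m$ when possible) together with the complementary trivial estimate $|H|=m$. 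Combining the regime $m \le q^{n/2}$ (where the trivial bound $m \le q^{n/2}$ already suffices) with the regime $m > q^{n/2}$ (where a balanced split gives $(rs\,q^n)^{1/2}$ comparable to $q^{n/2}$ after using $rs=m$ and relating $m$ to $q^n-1$) yields the uniform bound $q^{n/2}$. I expect the careful balancing of these two regimes, rather than the character-sum input itself, to be the technical heart of the argument.
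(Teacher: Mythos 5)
Your approach has a genuine gap, and it is exactly at the point you flag as ``the technical heart.'' Once you insist on a factorization $G=A\cdot B$ in which every element of $G$ is written \emph{exactly once} as a product, you are locked into $\#A\,\#B=\#G=m$, so Lemma~\ref{somacaracter} can never give anything better than $(m\,q^n)^{1/2}$ --- no choice of $r,s$ with $rs=m$ changes this. Your proposed rescue by balancing against the trivial bound $m$ then yields $\min\bigl(m,(m\,q^n)^{1/2}\bigr)$, which is $\le q^{n/2}$ only when $m\le q^{n/2}$. In the complementary regime $m>q^{n/2}$ (e.g.\ $G$ of index $2$, or $G=\fqn^*$ itself) both quantities exceed $q^{n/2}$, and the claim that a ``balanced split gives $(rs\,q^n)^{1/2}$ comparable to $q^{n/2}$'' is false: it is at least $q^{3n/4}$ there. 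So the argument does not close precisely in the hard case.

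The missing idea is to use the product set \emph{with multiplicity} rather than a bijective factorization. Take $A=H$ and $B=G$ in Lemma~\ref{somacaracter}. For each fixed $x\in H$ the products $xy$, $y\in G$, run over $H$ exactly once, so the double sum telescopes: $\sum_{x\in H}\sum_{y\in G}\psi(xy)=\#H\cdot\sum_{z\in H}\psi(z)$. The lemma bounds the left side by $(\#H\,\#G\,q^n)^{1/2}=\#H\cdot q^{n/2}$, and dividing both sides by $\#H$ gives $\bigl|\sum_{z\in H}\psi(z)\bigr|\le q^{n/2}$ uniformly in $\#G$. The point is that the overcounting factor $\#H$ appears on both sides and cancels, which is what lets the square-root saving survive for large subgroups. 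This is the paper's (one-line) proof; no case split or optimization over factorizations is needed.
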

\begin{proof}
    For each $\alpha\in H$, $$\sum_{y\in G}\psi(\alpha y)=
    \sum_{x\in H}\psi(x).$$   
    Taking $A=G$ and $B=H$ in Lemma \ref{somacaracter} and noticing that $\# G=\# H$, we get 

    $$
    \#H\cdot \left|  \sum_{x \in H} {\psi}(x) \right| =\left| \sum_{x \in H} \sum_{y \in G} {\psi}(xy) \right| \leq (\#H^2 q^n)^{1/2}
    $$
    and the result follows.
\end{proof}

In the following lemma we estimate the number of normal elements in a set where nontrivial additive character sums are uniformly bounded.

\begin{lemma}\label{normaiscoset}
    Let $M>0$ be a real number and let $S\subseteq \fq^*$ be a set such that, for every nontrivial additive character $\psi$, we have the inequality
    $$\left|\sum_{\alpha\in S}\psi(\alpha)\right|\leq M.$$ 
    If $N(S)$ is the number of normal elements $\alpha\in\fqn$ in $S$, then

    $$\frac{N(S)\cdot q^n}{\Phi_q(x^n-1)}>\#S-W_q(x^n-1)M.$$
\end{lemma}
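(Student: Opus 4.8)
The plan is to expand the normal-element count $N(S)$ using the character-sum formula from Lemma~\ref{norma} and then isolate the trivial character as the dominant term, bounding everything else with the hypothesis. First I would write
$$
N(S)=\sum_{\alpha\in S}\mathbbm{1}_N(\alpha)
=\frac{\Phi_q(x^n-1)}{q^n}\sum_{f\mid x^n-1}\frac{\mu_q(f)}{\Phi_q(f)}\sum_{\psi\in\Lambda_f}\sum_{\alpha\in S}\psi(\alpha),
$$
interchanging the finite sums freely. The term corresponding to $f=1$ is distinguished: its only character is the trivial one $\psi_0$, which contributes $\sum_{\alpha\in S}\psi_0(\alpha)=\#S$, while $\mu_q(1)=1$ and $\Phi_q(1)=1$. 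Thus the $f=1$ term equals $\dfrac{\Phi_q(x^n-1)}{q^n}\cdot\#S$, and multiplying through by $q^n/\Phi_q(x^n-1)$ turns this into the leading $\#S$ appearing on the right-hand side of the claimed inequality.

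Next I would bound the remaining terms, i.e.\ the contribution of all divisors $f\mid x^n-1$ with $f\neq 1$. For each such $f$ every $\psi\in\Lambda_f$ is a nontrivial character (the trivial character has $\F_q$-order $1$), so the hypothesis gives $\bigl|\sum_{\alpha\in S}\psi(\alpha)\bigr|\le M$. Using $\#\Lambda_f=\Phi_q(f)$ from Remark~\ref{rmk1} and $|\mu_q(f)|\le 1$, the inner double sum over $\Lambda_f$ is bounded in absolute value by
$$
\frac{|\mu_q(f)|}{\Phi_q(f)}\Bigl|\sum_{\psi\in\Lambda_f}\sum_{\alpha\in S}\psi(\alpha)\Bigr|
\le \frac{1}{\Phi_q(f)}\cdot \Phi_q(f)\cdot M = M,
$$
provided $f$ is squarefree (when $f$ is not squarefree $\mu_q(f)=0$ and the term vanishes, so it may be discarded). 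The number of surviving nontrivial terms is therefore the number of squarefree monic divisors of $x^n-1$ other than $1$, which is $W_q(x^n-1)-1$.

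Finally I would assemble the estimate. After multiplying by $q^n/\Phi_q(x^n-1)$, the error from all nontrivial terms is at most $(W_q(x^n-1)-1)M<W_q(x^n-1)M$, so
$$
\frac{N(S)\cdot q^n}{\Phi_q(x^n-1)}\ge \#S-(W_q(x^n-1)-1)M> \#S-W_q(x^n-1)M,
$$
which is the desired strict inequality. I do not anticipate a serious obstacle here, since the argument is a standard inclusion–exclusion over the character orders; the only point requiring care is the bookkeeping of the $f=1$ term versus the remaining squarefree divisors, and the observation that the contribution of each squarefree $f\neq 1$ collapses cleanly to $M$ because the factors $\Phi_q(f)$ cancel exactly against $\#\Lambda_f=\Phi_q(f)$. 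The strictness of the final inequality follows either from replacing $W_q(x^n-1)-1$ by $W_q(x^n-1)$ or, more carefully, from the fact that $M>0$.
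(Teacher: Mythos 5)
Your proposal is correct and follows essentially the same route as the paper: expand $N(S)$ via Lemma~\ref{norma}, isolate the $f=1$ (trivial character) term contributing $\#S$, and bound the remaining squarefree divisors using the hypothesis together with $\#\Lambda_f=\Phi_q(f)$, obtaining an error of at most $(W_q(x^n-1)-1)M<W_q(x^n-1)M$. The bookkeeping, including the source of strictness from $M>0$, matches the paper's argument.
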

\begin{proof}
It follows by the definition that 
$$N(S)=\sum_{\alpha\in S}\mathbbm{1}_N(\alpha).$$
By Lemma \ref{norma},
\begin{align}
    \frac{N(S)\cdot q^n}{\Phi_q(x^n-1)}&=\sum_{\alpha\in S}\sum_{f\mid x^n-1}\frac{\mu_q(f)}{\Phi_q(f)}\sum_{\psi\in\Lambda_f}\psi(\alpha)\nonumber\\
    &=\sum_{\alpha\in S}\psi_0(\alpha)+\underbrace{\sum_{\alpha\in S}\sum_{ f\mid x^n-1\atop f\ne 1}\frac{\mu_q(f)}{\Phi_q(f)}\sum_{\psi\in\Lambda_f}\psi(\alpha)}_{T}.
    \label{eq3}
\end{align}
Applying the triangular inequality on $T$ and reordering the summands, we obtain
\begin{align*}
    |T|&=\left|\sum_{ f\mid x^n-1\atop f\ne 1}\frac{\mu_q(f)}{\Phi_q(f)}\sum_{\psi\in\Lambda_f}\sum_{\alpha\in S}\psi(\alpha)\right|\\
    &\leq \sum_{f\mid x^n-1\atop f\ne 1}\frac{|\mu_q(f)|}{\Phi_q(f)}\sum_{\psi\in\Lambda_f}\left|\sum_{\alpha\in S}\psi(\alpha)\right|.
\end{align*}
From hypothesis,  $\left|\sum_{\alpha\in S}\psi(\alpha)\right|\leq M$ and, from Remark~\ref{rmk1}, we have $\#\Lambda_f=\Phi_q(f)$. Moreover,  $|\mu_q(f)|$ equals $1$ if $f$ is squarefree, and $0$ otherwise. Therefore 
we obtain

    \begin{equation}\label{eq4}
        |T|\leq \sum_{f\mid x^n-1\atop  f\ne 1\,\,   \text{and squarefree}}\frac{1}{\Phi_q(f)}\Phi_q(f)M=(W_q(x^n-1)-1)M.
    \end{equation}
Adding together Eq.~\eqref{eq3} and Ineq.~\eqref{eq4}, and applying the triangular inequality we obtain

\begin{align*}
    \frac{N(S)\cdot q^n}{\Phi_q(x^n-1)}&\geq\sum_{a\in S}\psi_0(a)-|T|.\\
    &>\#S-W_q(x^n-1)M.
\end{align*}
\end{proof}
Finally, we provide bounds for the quantity $W_q(x^n-1)$.
\begin{remark}\label{rmk2}
    For a polynomial $f\in\fq[x]$, recall that $W_q(f)$ denotes the number of squarefree divisors of $f$ in $\fq[x]$. In particular, $W_q(f)\le 2^{\deg(f)}$ and then we get the trivial bound $W_q(x^n-1)\le 2^n$. 
\end{remark}
    
    We introduce a useful result that provides a nontrivial bound for $W_q(x^n-1)$.

\begin{lemma}\cite[Lemma 3.7]{ep2n}\label{cotaw}
For $q$ a prime power, there exist $a, b \in \mathbb{N}$ such that
\[
W_q(x^n - 1) \leq 2^\frac{n + a}{b}.
\]
For $q \geq 29$, we have $a = 0$ and $b = 1$, for $7 \leq q \leq 27$ we have $a = q - 1$ and $b = 2$, and for small values of $q$ we may use the following values of $a$ and $b$:
\begin{itemize}
    \item $q=2, a=14, b=5;$
    \item $q=3, a=20 ,b=4;$
    \item $q=4, a=12 ,b=3;$
    \item $q=5, a=18 ,b=3.$
\end{itemize}
\end{lemma}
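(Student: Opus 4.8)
The plan is to reduce everything to counting the distinct monic irreducible factors of $x^n-1$. First I would dispose of the characteristic: writing $q=p^s$ and $n=p^e m$ with $p\nmid m$, we have $x^n-1=(x^m-1)^{p^e}$, so $x^n-1$ and $x^m-1$ have exactly the same radical and hence $W_q(x^n-1)=W_q(x^m-1)$. Since $m\le n$, any bound $W_q(x^m-1)\le 2^{(m+a)/b}$ also gives $W_q(x^n-1)\le 2^{(m+a)/b}\le 2^{(n+a)/b}$, so it suffices to argue under the assumption $p\nmid n$. Under this assumption $x^n-1$ is squarefree of degree $n$, and if $r$ denotes the number of its distinct monic irreducible factors then, since the squarefree monic divisors are exactly the products of subsets of these factors, $W_q(x^n-1)=2^{r}$. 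Thus the whole statement reduces to the estimate $r\le (n+a)/b$.

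Next I would exploit the degree budget. Let $r_j$ be the number of monic irreducible factors of $x^n-1$ of degree exactly $j$; since $x^n-1$ is squarefree of degree $n$ we have the two identities $r=\sum_j r_j$ and $\sum_j j\,r_j=n$. Fixing an integer $b\ge 1$ and separating the factors of degree $<b$ from those of degree $\ge b$, I would bound the number of the latter by their total degree divided by $b$, obtaining
$$ r=\sum_{j<b}r_j+\sum_{j\ge b}r_j\le \sum_{j<b}r_j+\frac1b\Big(n-\sum_{j<b}j\,r_j\Big)=\frac nb+\frac1b\sum_{j=1}^{b-1}(b-j)\,r_j. $$
It remains to control the low-degree counts $r_j$ uniformly in $n$. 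Every irreducible factor of $x^n-1$ has a nonzero root, so it is a monic irreducible polynomial different from $x$; hence $r_j\le R_j$, where $R_j=I_j:=\frac1j\sum_{e\mid j}\mu(e)q^{j/e}$ is the number of monic irreducible polynomials of degree $j$ over $\fq$ for $j\ge 2$, with the one improvement $R_1=I_1-1=q-1$ coming from discarding the factor $x$. Substituting these bounds yields $r\le (n+a)/b$ with $a=\sum_{j=1}^{b-1}(b-j)\,R_j$, which proves the existence of admissible $a,b$ and gives $W_q(x^n-1)\le 2^{(n+a)/b}$.

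Finally I would specialise $b$ to each range of $q$ and evaluate the constant $a$ from the closed form above; this is a routine computation that reproduces the stated values (for instance $b=2$ gives $a=R_1=q-1$, while $b=1$ gives $a=0$, and the small-$q$ cases follow by plugging in the relevant $R_j$). The only genuine decision is the choice of $b$ for each $q$: the slope of the bound is $1/b$, which improves as $b$ grows, but the additive constant $a$ is dominated by its top term $(b-1)R_{b-1}$, and $R_{b-1}$ grows like $q^{\,b-1}/(b-1)$; so for large $q$ one is forced to take $b$ small (indeed $b=1$ for $q\ge 29$, where the trivial slope already suffices for the applications), whereas for small $q$ one can afford a larger $b$ and a better slope at the cost of a moderate $a$. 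I expect the only subtle point to be this optimisation trade-off together with the correct, uniform-in-$n$ bookkeeping of the low-degree counts $R_j$ (in particular remembering to exclude the factor $x$ when $j=1$); everything else is driven by the degree identity $\sum_j j\,r_j=n$ and is mechanical.
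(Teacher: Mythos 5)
Your argument is correct and complete: the reduction to the radical ($W_q(x^n-1)=W_q(x^m-1)$ with $n=p^em$, $p\nmid m$), the degree-counting inequality $r\le \frac{n}{b}+\frac{1}{b}\sum_{j=1}^{b-1}(b-j)r_j$, and the uniform bounds $r_1\le q-1$, $r_j\le I_j$ do reproduce every stated constant (e.g.\ for $q=2$, $b=5$: $4\cdot 1+3\cdot 1+2\cdot 2+1\cdot 3=14$). Note that the paper itself offers no proof — it cites the lemma from \cite{ep2n} — but your reconstruction is the standard argument behind such bounds and matches the cited source's method, so there is nothing to correct.
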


\section{Proof of main results}
In this section we provide the proof of our main results. We apply different techniques based on characteristic functions and nontrivial bounds for additive character sums. In the proof of Theorem~\ref{thm1}, after employing our methods, we are left with some finite number of inconclusive cases. For those, we search for the existence of our elements of interest by computational methods. In Appendix~\ref{ap1} we present a lemma in which our code is based.

\subsection{Proof of Theorem \ref{thm1}}
For a positive integer $n$ and $a\in\fq$, set $N_a=\{\alpha\in\fqn\colon N_{n/1}(\alpha)=a\}$. It is direct to check that $N_a$ is a coset of $N_1$ in $\fqn^*$. Therefore, by Corollary \ref{cotacoset} and Lemma \ref{normaiscoset}, $$\frac{N(N_a)\cdot q^n}{\Phi_q(x^n-1)}>\#N_a-W_q(x^n-1)q^{n/2}.$$
Since $\#N_a=\#N_1=\frac{q^n-1}{q-1}$, as it is the number of $\alpha\in\fqn$ such that $\alpha^\frac{q^n-1}{q-1}=1$, there exists a normal element $\alpha\in\fqn$ with $N_{n/1}$ provided that \begin{equation}\label{eq5}
    \frac{q^n-1}{q-1}>W_q(x^n-1)q^{n/2}.
\end{equation}
We proceed by computing the pairs $(q,n)$ satisfying Ineq.~\eqref{eq5}. By Remark \ref{rmk2} and Lemma \ref{cotaw} we can replace Ineq.~\eqref{eq5} by
\begin{equation}\label{eq6}
    \frac{q^n-1}{q-1}>2^nq^{n/2}.
\end{equation}
and 
\begin{equation}\label{eq7}
    \frac{q^n-1}{q-1}>2^{\frac{n+a}{b}}q^{n/2}.
\end{equation}
with $a,b$ as in Lemma \ref{cotaw}. It is important to notice that if $(q_0,n_0)$ satisfies Ineq.~\eqref{eq5}, \eqref{eq6} or $\eqref{eq7}$, then every pair $(q,n)$ with $q\geq q_0$ and $n\geq n_0$ also satisfies the same inequality. By a direct computation, we obtain that the result holds if
\begin{itemize}
    \item $3\leq n\leq 7, q\geq 64$;
    \item $n\geq 7$ and $q\ge 2$.
\end{itemize}

We analyze the remaining cases separately. For $3\le n\le 7$ and $q<64$, 
we use a SageMath program and check the existence of a normal element with one prescribed norm, obtaining positive answer to all the cases. For more details on the procedure, see Appendix~\ref{ap1}. 
Now for $n=2$, we recall that $\alpha\in\F_{q^2}$ is normal if and only if the $\fq$-order of $\alpha$ is $x^2-1$. Suppose that $\beta\in\F_{q^2}$ is not a normal element and $\beta^{q+1}=N_{2/1}(\beta)=b$, for some $b\in\fq^*$. In particular, $\beta\ne 0$ and so its $\F_q$-order equals $x+1$ or $x-1$, i.e., $\beta^{q}=\pm \beta$. This entails that $\beta$ satisfies 
\begin{center}
   \systeme{\beta^{q+1}=b\text{,},\beta^q=\pm \beta.} 
\end{center} We must have that $\beta^2=b$ or $-\beta^2=b$. Therefore, we can only have up to $4$ non normal elements $\beta\in \F_{q^2}$ with $N_{2/1}(\beta)=b$. Since we have $q+1$ elements with norm $b$, there always exists a normal element $\alpha$ in $\F_{q^2}$ with $N_{2/1}(\alpha)=b$ provided $q+1>4$. For $q=2, 3$, every normal element in $\F_{q^2}$ has norm $1$ over $\F_q$. 
\qed

\subsection{Proof of Theorem \ref{thm2}}

Let $n > 1$ be an integer that is not a prime power, $1 < k < \delta(n)$, $\mathcal{D} =
(d_1, \ldots, d_k) \in \Gamma_k(n)$ and $\mathcal{A} = (a_1, \ldots, a_k) \in \F^*_\mathcal{D}$ a $\mathcal{D}$-admissible tuple. Set $N_\mathcal{A}=\{\alpha\in\fqn^*\colon N_{n/d_i}(\alpha)=a_i, 1\leq i\leq k\}$. 
For $\mathcal{A}_1=\{1,\ldots,1\}\in\F^*_{\mathcal{D}}$, $N_\mathcal{A}$ is a coset of $N_{\mathcal{A}_1}$ in $\fqn^*$ and by Lemma \ref{normaiscoset} 

$$\frac{N(N_\mathcal{A})\cdot q^n}{\Phi_q(x^n-1)}>\#N_\mathcal{A}-W_q(x^n-1)q^{n/2}.$$

By Lemma \ref{lema1}, $\#N_\mathcal{A}=\gcd\left(\frac{q^n-1}{q^{d_1}-1},\ldots,\frac{q^n-1}{q^{d_k}-1}\right)$ and the result follows.  \qed

\subsection{Proof of Theorem \ref{thm3}}
Suppose that $\lcm(d_1,\ldots, d_k)=e<n.$ Since $k>1$, we have $e\geq 6$. From hypothesis, the tuple $(a_1, \ldots, a_k)$ is $\mathcal D$-admissible, therefore, Lemma~\ref{lema1} guarantees the existence of an element $\beta\in \F_{q^{e}}^*$ such that $N_{e/d_i}(\beta)=a_i$. Finally, since $e\ge 6$ we have $q^e>3$, hence Theorem~\ref{thm1} entails that there exists a normal element $\alpha\in \F_{q^n}$ such that $N_{n/e}(\alpha)=\beta$. We then obtain
$$N_{n/d_i}(\alpha)=N_{e/{d_i}}(N_{{n}/{e}}(\alpha))=N_{e/d_i}(\beta)=a_i,$$
for every $1\le i\le k$ and the result follows.  \qed

\subsection{Proof of Theorem \ref{thm4}}
 Let $n > 1$ be an integer that is not a prime power, $1 < k < \delta(n)$, $\mathcal{D} =
(d_1, \ldots, d_k) \in \Gamma_k(n)$ with $\gcd(d_i,d_j)=1$, $n=d_1\cdots d_k$ and let $\mathcal{A} = (a_1, \ldots, a_k) \in \F^*_\mathcal{D}$ be a $\mathcal{D}$-admissible tuple. Recall that $d_1<\cdots<d_k<n$. We start showing that 
\begin{equation}\label{eq12}
    \gcd\left(\frac{q^n-1}{q^{d_1}-1},\ldots,\frac{q^n-1}{q^{d_k}-1}\right)\geq \frac{q^n}{q^{d_1+\ldots+d_k}}.
\end{equation}
Observe that $$\gcd\left(\frac{q^n-1}{q^{d_1}-1},\ldots,\frac{q^n-1}{q^{d_k}-1}\right)=\frac{q^n-1}{\lcm(q^{d_1}-1,\ldots,q^{d_k}-1)}=\frac{(q^n-1)(q-1)^{k-1}}{\prod_{i=1}^k(q^{d_i}-1)}.$$
Since $d_i<n$, for every $1\leq i\leq k$, it follows that $1+\frac{1}{q^{d_i}-1}\geq 1+\frac{1}{q^{n}-1}>1.$ Therefore 
$$\prod_{i=1}^k\left(1+\frac{1}{q^{d_i}-1}\right)=\prod_{i=1}^k\left(\frac{q^{d_i}}{q^{d_i}-1}\right)\geq \left(\frac{q^n}{q^n-1}\right)^k>\frac{q^n}{q^{n}-1}$$ and Ineq.\eqref{eq12} follows.
From Theorem \ref{thm2}, we just need to check the inequality
$$q^{n/2-(d_1+\cdots+d_k)}\geq W_q(x^n-1)$$
for the cases described in the statement of Theorem~\ref{thm4}. We prove items 1 and 2 in the statement of Theorem \ref{thm4} separately.
\subsubsection{The case $k \geq 3$} We separate the cases $k\geq 4$ and $k=3$. 
\begin{itemize}
    \item $k\geq 4$: We have that  $n\geq 2\cdot3\cdot5\cdot7^{k-3}\geq 210$. Moreover, we observe that $\frac{n}{d_i}=\prod_{j\ne i}d_j\ge k!$ since the $d_j$'s are at least $2$ and pairwise relatively prime.
  Since $k!\ge 6k$ for every $k\ge 4$, we obtain  
    $$d_i=\frac{n}{\prod_{j\neq i}d_j}\leq \frac{n}{6k}.$$ 
    Therefore, $q^{n/2-(d_1+\cdots+d_k)}\geq q^{n/2-k n/6k}=q^{n/3}$. Using the bounds for $W_q(x^n-1)$, there exists a normal element on $\fqn$ with prescribed norms provided that 

\begin{equation}\label{eq13}
    q^{n/3}\geq 2^{\frac{n+a}{b}},
\end{equation} where $a,b$ are as in Lemma \ref{cotaw}. For $a=0$ and $b=1$, Ineq.~\eqref{eq13} holds for every $q>8$ and every $n$. For the range $2\le q\le 8$, we use values for $a$ and $b$ in Lemma~\ref{cotaw} and conclude that Ineq.~\eqref{eq13} holds whenever $n\ge 60$. Since $n\ge 210$, we completed the case $k\geq 4$.


\item $k=3$: Since $\gcd(d_i, d_j)=1$, we have $n\ge 60$ unless $n=30=2\cdot 3\cdot 5$ or $n=42=2\cdot 3\cdot 7$. Assume that $n\ne 30, 42$, hence $n\ge 60$. 
If $d_1=2, d_2=3$ and $d_3=n/6$, then $d_1+d_2+d_3= 2+3+n/6.$ 
Otherwise, we note that $d_1d_2\geq 10$, $d_3\leq n/10$ and $n\ge 2\cdot 5\cdot 7=70$.  We always have
$d_1\leq \sqrt[3]{n}$ and $d_2\leq\sqrt{n/2}$. Therefore, 
$d_1+d_2+d_3\le \sqrt[3]{n}+\sqrt{n/2}+n/10$.
As $2+3+n/6\le n/4$ for $n\ge 60$ and $\sqrt[3]{n}+\sqrt{n/2}+n/10\le n/4$ for $n\ge 70$, we conclude that $d_1+d_2+d_3\le n/4$, hence $n/2-d_1-d_2-d_3\ge n/4$. 
As in the previous case, the result holds provided that 
\begin{equation}\label{eq14} q^{n/4}\geq 2^\frac{n+a}{b}.
\end{equation}
As before, we employ Lemma \ref{cotaw}.
For $a=0$ and $b=1$, the result holds for every $q\geq 17$ and every $n\geq 60$. For $q\leq 16$, we use the values for $a$ and $b$ in Lemma~\ref{cotaw} to conclude that the result also holds for $n\geq 60$. It remains to consider the cases $n=30$ and $n=42$. For these values, Theorem \ref{thm2} entails that the result holds if 
\begin{equation}\label{eq15}
\frac{(q^n-1)(q-1)}{(q^{d_1}-1)\cdot(q^{d_2}-1)\cdot(q^{d_3}-1)}\geq W_q(x^n-1)q^{n/2}
\end{equation}
with $d_1=2, d_2=3$, $d_3=5$ and  $d_1=2, d_2=3$, $d_3=7$, respectively. Using the bound $W_q(x^n-1)\leq 2^n$, we conclude the result for $q\geq 23$. For $2\leq q\leq 19$, we use SageMath to compute the explicit value of $W_q(x^{30}-1)$ and $W_q(x^{42}-1)$. We also check that the result holds for every $q\geq 2$, and this concludes the case $k=3$.
\end{itemize}

\subsubsection{The case $k=2$}
We split the proof into cases.
\begin{itemize}
    \item If $8\leq d_1<d_2$, then $d_i\leq n/8$ and $d_1+d_2\leq n/4$. The result follows by the same argument employed in the case $k=3$. 
    \item If $7\leq d_1<d_2$, then $d_2\leq n/7$ and $d_1\leq n/8$. Therefore, it suffices to check that $$q^{n/2-n/7-n/8}=q^{\frac{13}{56}n}\geq 2^{\frac{n+a}{b}}.$$ Again, we employ Lemma \ref{cotaw}. For $a=0, b=1$, the inequality is true for $q>19$. We then use the values for $a$ and $b$ on Lemma \ref{cotaw} to conclude that the result holds for $2\leq q\leq 19$. 
    \item Let $d=d_1\in \{3, 4, 5, 6\}$, hence $d_1+d_2=d+n/d$.
    Theorem \ref{thm2} entails that the result holds if
    $$\frac{(q^n-1)(q-1)}{(q^{d_1}-1)(q^{d_2}-1)}>q^{n/2}2^{n}.$$
We observe that the LHS of the previous inequality is bounded below by 
$$\frac{q-1}{q}q^{n-d_1-d_2+1}=\frac{q-1}{q}q^{n-d-n/d+1}.$$
Therefore, the results holds if $q$ is sufficiently large, provided that 
$$n-d-n/d+1>n/2\iff \frac{1}{2}>\frac{1}{d_1}+\frac{1}{d_2}-\frac{1}{d_1d_2}.$$
It is direct to verify that the latter holds if $(d_1, d_2)\ne (3, 4)$. \qed

\end{itemize}


 

\appendix
\renewcommand{\thesection}{\Alph{section}} 
\makeatletter
\renewcommand\@seccntformat[1]{\appendixname\ \csname the#1\endcsname.\hspace{0.5em}}
\makeatother
\section{Normal elements with one prescribe norm}\label{ap1}
The following result is straightforward.
\begin{lemma}\label{lem:ap}
    Let $n$ be a positive integer. If $\mathcal N$ is the set of normal elements of $\fqn$ and $$x^{q-1}-1=\prod_{\alpha\in\fq^*}(x-\alpha)\bigm| \prod_{\beta\in \mathcal N}\left(x-\beta^{\frac{q^n-1}{q-1}}\right),$$
    then for every $a\in\fq^*$ there exists a normal element $\beta$ with $N_{n/1}(\beta)=a$.
\end{lemma}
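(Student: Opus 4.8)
The plan is to read the hypothesis as a purely combinatorial statement about which elements of $\fq^*$ occur as norms of normal elements, and then observe that the stated divisibility is nothing more than a repackaging of the desired conclusion. First I would note that every normal element $\beta\in\mathcal{N}$ is nonzero, so its norm $N_{n/1}(\beta)=\beta^{(q^n-1)/(q-1)}$ lies in $\fq^*$. Consequently the polynomial
$$g(x):=\prod_{\beta\in\mathcal{N}}\bigl(x-\beta^{(q^n-1)/(q-1)}\bigr)$$
is an honest element of $\fq[x]$ (a product of linear factors with constant terms in $\fq$), and the divisibility in the hypothesis is a relation entirely inside $\fq[x]$.

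Next I would record two elementary facts. The polynomial $x^{q-1}-1=\prod_{\alpha\in\fq^*}(x-\alpha)$ splits into $q-1$ distinct linear factors over $\fq$; hence it is squarefree, and its root set in $\fq$ is exactly $\fq^*$. Moreover, for a squarefree $f\in\fq[x]$ one has $f\mid g$ if and only if every root of $f$ is a root of $g$ (the roots of $f$ are distinct, so each corresponding linear factor must appear in $g$). Applying this with $f=x^{q-1}-1$, the hypothesis $x^{q-1}-1\mid g$ becomes equivalent to the assertion that $g(a)=0$ for every $a\in\fq^*$.

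Finally I would translate the root condition back into the existence statement. Fixing $a\in\fq^*$, the previous step gives $g(a)=\prod_{\beta\in\mathcal{N}}\bigl(a-N_{n/1}(\beta)\bigr)=0$, and since $\fqn$ is a field the product vanishes only if some factor does; that is, $a=N_{n/1}(\beta)$ for some normal $\beta$. As $a$ was arbitrary, every element of $\fq^*$ is realized as a norm of a normal element, which is the conclusion. I expect there to be no genuine obstacle: the entire content is the equivalence between divisibility by a squarefree polynomial and containment of roots. The only points deserving care are verifying that $g$ has coefficients in $\fq$, so that the divisibility is meaningful over $\fq$, and invoking squarefreeness of $x^{q-1}-1$ to pass from divisibility to root containment. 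Since each step is reversible, the criterion is in fact an equivalence, which is precisely what makes it a clean, machine-checkable test for the finitely many computational cases $3\le n\le 7$ with $q<64$.
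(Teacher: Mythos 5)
Your argument is correct and is exactly the intended one: the paper labels this lemma ``straightforward'' and gives no proof, and your observation that divisibility by the squarefree polynomial $x^{q-1}-1$ is equivalent to every $a\in\fq^*$ being a root of $\prod_{\beta\in\mathcal N}\bigl(x-N_{n/1}(\beta)\bigr)$, i.e., to every $a$ being the norm of some normal element, is precisely the content. No discrepancies with the paper's approach.
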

Moroever, Theorem \ref{gcdnormal} entails that  $$\mathcal N=\left\{\alpha\in\fqn^*\colon \gcd\left(x^n-1, \sum_{i=0}^{n-1}\alpha^{q^i}x^{n-1-i}\right)=1\right\}$$ is the set of normal elements of $\fqn$. Hence, given $(q, n)$ we only need to compute the set $\mathcal N$, and check if the divisibility condition in Lemma~\ref{lem:ap} holds. This is easily implemented in softwares like SageMath.

\end{document}